\newtheorem{theorem}{Theorem}[section]
\newtheorem*{mmtheorem}{Theorem \ref{r+1normal}}
\newtheorem{lemma}[theorem]{Lemma}
\newtheorem{corollary}[theorem]{Corollary}
\theoremstyle{definition}
\newtheorem*{definition}{Definition}
\newtheorem*{remark}{Remark}
\def\PG{\mathrm{PG}} \def\AG{\mathrm{AG}}
\def\PGL{\mathrm{PGL}}
\def\A{\mathcal{A}}  
\def\D{\mathcal{D}}  
 \def\K{\mathcal{K}}
\def\L{\mathcal{L}} \def\M{\mathcal{M}} 
 \def\P{\mathcal{P}}
\def\R{\mathcal{R}} \def\S{\mathcal{S}}
\def\T{\mathcal{T}}
\def\Mb{\mathbf{M}}
\def\NN{\mathbb{N}}
\def\F{\mathbb{F}}
\def\Fq{\mathbb{F}_q}
\def\Fqn{\mathbb{F}_{q^n}}
\title{A geometric characterisation of Desarguesian spreads}
\author{ Sara Rottey \and John Sheekey\thanks{This author is supported by the Fund for Scientific Research Flanders (FWO -- Vlaanderen).}}
\date{}
\begin{document}
\maketitle
\begin{abstract}
We provide a characterisation of $(n-1)$-spreads in $\PG(rn-1,q)$, $r>2$, that have $r$ normal elements in general position. In the same way, we obtain a geometric characterisation of Desarguesian $(n-1)$-spreads in $\PG(rn-1,q)$, $r>2$.
\end{abstract}

\section{Introduction}

In this paper, we study spreads in finite projective spaces. An $(n-1)$-spread of $\Pi=\PG(N-1,q)$ is a set of $(n-1)$-dimensional subspaces partitioning the points of $\Pi$. It is well-known since Segre \cite{Segre} that an $(n-1)$-spread exists in $\PG(N-1,q)$ if and only if $n$ divides $N$. The ``only if'' direction follows from a counting argument, while the other direction follows by Segre's construction of {\em Desarguesian spreads}.

Desarguesian spreads play an important role in finite geometries; for example in field reduction and linear (blocking) sets \cite{MichelGeertruiFieldReduction}, eggs and TGQ's \cite{BaderLunardon}. Moreover, every $(n-1)$-spread canonically defines an incidence structure, which is a translation $2-(q^{rn}, q^n, 1)$ design with parallelism. When the spread is Desarguesian, the corresponding design is isomorphic to the affine space $\AG(r, q^n)$. 

Notwithstanding their importance, few geometric characterisations of Desarguesian are known.
A geometric characterisation of Desarguesian $(n-1)$-spreads in $\PG(rn-1,q)$ was obtained by Beutelspacher and Ueberberg in {\rm \cite[Corollary]{BeutelspacherUeberberg}} by considering the intersection of spread elements with all $r(n-1)$-subspaces.
However, the
two most famous and important characterisations of Desarguesian spreads arise
from their correspondence with regular (for $r=2$ and $q > 2$) and normal spreads (for $r > 2$).
An $(n-1)$-spread in $\PG(2n-1, q)$ is said to be {\em regular} if it contains every regulus defined by any three of its elements. When $q>2$, an $(n-1)$-spread in $\PG(2n-1, q)$ is regular if and only if it is Desarguesian.
An $(n-1)$-spread in $\PG(rn-1,q)$ is said to be {\em normal} if it induces a spread in the $(2n-1)$-space spanned by any two of its elements. It is known that an $(n-1)$-spread in $\PG(rn-1, q)$, $r>2$, is normal if and only if it is Desarguesian (see \cite{BarlottiCofman}).

We will focus on the normality of Desarguesian spreads, by introducing the notion of a {\em normal element} of a spread. We say that an element $E$ of an $(n-1)$-spread $\S$ of $\Pi=\PG(rn-1,q)$ is {\em normal} if $\S$ induces a spread in the $(2n-1)$-space spanned by $E$ and any other element of $\S$. %Equivalently, $\S/E$ induces an $(n-1)$-spread in the quotient space $\Pi/E$.
Clearly, by definition, a spread is normal if and only if all its elements are normal. The motivation for this article consists of the following questions:
\begin{itemize}
\item[]How many normal elements does a spread need, to ensure that it is normal/Desarguesian?
\item[]Can we characterise a spread given the configuration of its normal elements?
\end{itemize}

We will give answers to both questions; the answer to the first question is easily stated in the following theorem.

\begin{mmtheorem}
Consider an $(n-1)$-spread $\S$ in $\PG(rn-1,q)$, $r>2$.
If $\S$ contains $r+1$ normal elements in general position, then $\S$ is a Desarguesian spread.
\end{mmtheorem}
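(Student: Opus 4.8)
By the result of Barlotti and Cofman quoted above, an $(n-1)$-spread in $\PG(rn-1,q)$, $r>2$, is Desarguesian if and only if it is normal, so it is enough to show that every element of $\S$ is normal. The plan is to prove this by induction on $r$, combining a ``contraction'' step with the extra normal element. First I would coordinatise: writing $V=V(rn,q)$ and letting $E_0,E_1,\dots,E_r$ be the $r+1$ normal elements in general position, the general position hypothesis says that any $r$ of them form a direct-sum decomposition of $V$, so one may identify $V$ with $U^r$, where $U=V(n,q)$, in such a way that $E_i$ is the $i$-th coordinate subspace for $1\le i\le r$ and — adjusting the identifications $U\to E_i$, which is possible precisely because $E_0$ is a complement of $\bigoplus_{i\ne j}E_i$ for each $j$ — so that $E_0=\{(u,u,\dots,u):u\in U\}$.

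The basic observation is the following. If $E$ is a normal element of a spread $\S$ of $V$ and $\pi\colon V\to V/E$ is the projection, then $\S/E:=\{\langle E,F\rangle/E:F\in\S,\ F\ne E\}$ is an $(n-1)$-spread of $V/E$: normality says $\S$ induces a spread of each $\langle E,F\rangle$, and if two of the subspaces $\langle E,F\rangle/E$, $\langle E,G\rangle/E$ met in a point, then the spread element of $\S$ through a lift of that point would lie in both $\langle E,F\rangle$ and $\langle E,G\rangle$, forcing $\langle E,F\rangle=\langle E,G\rangle$. Moreover, if $E$ and $E'$ are both normal in $\S$, then $\pi(E')$ is normal in $\S/E$: for any $F\in\S$ the space $\langle E',F\rangle$ is partitioned by $\S$ (since $E'$ is normal), hence so is $\langle E,E',F\rangle$ — cover it by the spaces $\langle E,G\rangle$ with $G\in\S$ and $G\subseteq\langle E',F\rangle$, each of which is partitioned because $E$ is normal and which pairwise meet exactly in $E$ — and applying $\pi$ shows that $\S/E$ induces a spread of $\langle\pi(E'),\pi(F)\rangle$. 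Finally, the images in $V/E_1\cong U^{r-1}$ of $E_0,E_2,\dots,E_r$ are again $r$ subspaces in general position (as is easily checked from the general position of $E_0,\dots,E_r$). Hence $\S/E_1$, and likewise $\S/E_i$ for each $i$, is an $(n-1)$-spread of $\PG((r-1)n-1,q)$ carrying $(r-1)+1$ normal elements in general position; for $r>3$ the inductive hypothesis then gives that each $\S/E_i$ is Desarguesian, while the case $r=3$ has to be treated separately.

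It remains to climb back from ``$\S/E_i$ is Desarguesian for every $i$'' to ``$\S$ is Desarguesian''. A Desarguesian $\S/E_1$ equips $U^{r-1}$, and hence $U$, with an $\Fqn$-structure for which $\S/E_1$ is the set of $\Fqn$-lines and the images of $E_0,E_2,\dots,E_r$ are $\Fqn$-points; since the iterated contractions $\S/\langle E_i,E_j\rangle$ are again single spreads, the $\Fqn$-structures on $U$ obtained from the various $\S/E_i$ are compatible, and together they endow $V=U^r$ with a Desarguesian spread structure in which every $E_i$ is an $\Fqn$-point. For $F\in\S$, its image $\pi_i(F)$ in $V/E_i$ is then an $\Fqn$-line, so $F$ is contained in $\pi_i^{-1}(\pi_i(F))$, an $\Fqn$-subspace of $\Fqn$-dimension $2$; intersecting these over $i$ one is left with a single $\Fqn$-line, which $F$ must equal since it is an $n$-dimensional $\F_q$-subspace. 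Thus $\S$ consists of $\Fqn$-lines and is Desarguesian.

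The two places where the contraction loses information are where I expect the real work to lie. One is the base case $r=3$: there each $\S/E_i$ is a spread of $\PG(2n-1,q)$, in which every element is automatically normal, so the contraction lemma is vacuous and one must instead show directly that the normality of $E_0,E_1,E_2,E_3$ forces the spread set coordinatising $\S$ on the $2n$-spaces $\langle E_i,E_j\rangle$ to be a field. The other is verifying that the $\Fqn$-structures produced by the various contractions are genuinely compatible. A cleaner alternative — probably closer to the intended argument, given that the abstract announces a parallel classification — would be to first describe all $(n-1)$-spreads with $r$ normal elements in general position and then check that the presence of an $(r+1)$-th such element, in general position, collapses that classification to the Desarguesian spread alone; the coordinatisation and the contraction lemma above would still be the natural starting point.
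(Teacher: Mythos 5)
Your contraction machinery is essentially sound: that $\S/E$ is an $(n-1)$-spread of the quotient when $E$ is normal, that the image of a second normal element $E'$ is normal in $\S/E$, and that the images of $E_0,E_2,\dots,E_r$ remain in general position are all correct, and they give a legitimate inductive reduction (different from the paper's, which restricts $\S$ to $\Pi=\langle S_1,\dots,S_{t-1}\rangle$, shows $T=\langle S_0,S_t\rangle\cap\Pi$ is a normal element of $\S\cap\Pi$, and then extends via Lemma \ref{UniqueDesSpread}). But as written the proposal has two genuine gaps, both of which you flag yourself, and they are exactly where the content of the theorem lies. The first is the base case $r=3$, which you do not prove at all: you only state that one must show the normality of the four elements forces the coordinatising spread set to be a field. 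In the paper this is the bulk of the work: three normal elements in general position force $\S=\S_3(\mathbf{M})$ with $\mathbf{M}$ closed under multiplication (Theorem \ref{NearfieldSpread}, proved by projecting spread elements from the normal elements onto $\langle S_2,S_3\rangle$), and the fourth element $(I,I,I)$ then gives $I-A\in\mathbf{M}$ for every $A\in\mathbf{M}$, which combined with multiplicative closure and unique solvability of $BA=C$ yields closure under addition, hence a field. Since your quotient step reduces $r=4$ to $r=3$, the entire induction rests on this missing case.

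The second gap is the climb-back from ``$\S/E_i$ is Desarguesian for every $i$'' to ``$\S$ is Desarguesian''. The assertion that the $\F_{q^n}$-structures induced by the various quotients are compatible ``since the iterated contractions $\S/\langle E_i,E_j\rangle$ are again single spreads'' is not an argument: even granting that these double quotients are spreads (itself unproven), you still need to produce one global $\F_{q^n}$-structure on $V$ under which every $\pi_i^{-1}(\pi_i(F))=\langle E_i,F\rangle$ is an $\F_{q^n}$-subspace, and you need to check that the resulting set of $\F_{q^n}$-lines actually coincides with $\S$ (your final intersection step also silently assumes $\bigcap_i\langle E_i,F\rangle=F$, which requires a short argument when $F$ lies in some $\langle E_i,E_j\rangle$). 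This gluing is essentially equivalent to the uniqueness statement the paper imports from \cite{WijEggs} as Lemma \ref{UniqueDesSpread}, together with the verification — done in the paper using the normality of $S_0$ and $S_t$ and a projection from $S_1$ — that $\S$ equals the unique Desarguesian extension. So the proposal is a reasonable plan rather than a proof: to complete it you must supply the $r=3$ case (equivalently, the nearfield classification for $r$ normal elements plus the additive-closure argument) and replace the compatibility claim by an actual gluing argument or by the restriction-plus-unique-extension step used in the paper.
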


This paper is organised as follows. In Section \ref{PrelimQuasi}, we introduce the necessary preliminaries. In Section \ref{r normal elements} we obtain a characterisation of $(n-1)$-spreads in $\PG(rn-1,q)$ having $r$ normal elements in general position. We will see in Section \ref{r+1 normal elements} that for some $n$ and $q$ these spreads must be Desarguesian. Moreover, we obtain a characterisation of Desarguesian spreads as those having at least $r+1$ normal elements in general position. In Section \ref{3 on a line}, we consider spreads containing normal elements, not in general postion, but contained in the same $(2n-1)$-space.
In Section \ref{lessthanr} we consider examples of spreads in $\PG(rn-1,q)$ containing less than $r$ normal elements, and we characterise the spreads with $r-1$ normal elements in general position. Finally in Section \ref{sperner} we formulate our results in terms of the designs corresponding to the spreads.

\section{Preliminaries}\label{PrelimQuasi}
In this paper, all considered objects will be finite. Denote the $m$-dimensional projective space over the finite field $\F_q$ with $q$ elements, $q=p^h$, $p$ prime, by $\PG(m,q)$.

\subsection{Desarguesian spreads and choosing coordinates}

A {\em Desarguesian spread} of $\PG(rn-1,q)$ can be obtained by applying {\em field reduction} to the points of $\PG(r-1,q^n)$. The underlying vector space of the projective space $\PG(r-1,q^n)$ is $V(r,q^n)$; if we consider $V(r,q^n)$ as a vector space over $\F_q$, then it has dimension $rn$, so it defines $\PG(rn-1,q)$. In this way, every point $P$ of $\PG(r-1,q^n)$ corresponds to a subspace of $\PG(rn-1,q)$ of dimension $(n-1)$ and it is not hard to see that this set of $(n-1)$-spaces forms a spread of $\PG(rn-1,q)$, which is called a Desarguesian spread. For more information about field reduction and Desarguesian spreads, we refer to \cite{MichelGeertruiFieldReduction}.

\begin{definition}
An element $E$ of an $(n-1)$-spread $\S$ of $\PG(rn-1,q)$ is a {\em normal element} of $\S$ if, for every $F \in \S$, the $(2n-1)$-space $\langle E, F\rangle$ is partitioned by elements of $\S$. Equivalently, this means that $\S/E$ defines an $(n-1)$-spread in the quotient space $\PG((r-1)n-1,q)\cong \PG(rn-1,q)/E$.
\end{definition}
A spread is called {\em normal} when all its elements are normal elements.
An $(n-1)$-spread of $\PG(rn-1,q)$, $r>2$, is normal if and only if it is Desarguesian (see \cite{BarlottiCofman}).

We will use the following coordinates. The point $P$ of $\PG(r-1,q^n)$, defined by the vector $(a_1,a_2,\ldots,a_r)\in (\F_{q^n})^r$, is denoted by $(a_1,a_2,\ldots,a_r)_{\F_{q^n}}$, reflecting the fact that every $\F_{q^n}$-multiple of $(a_1,a_2,\ldots,a_r)$ gives rise to the point $P$.
We can identify the vector space $\F_{q}^{nr}$ with $(\F_{q^n})^r$, and hence write every point of $\PG(rn-1,q)$ as $(a_1,\ldots,a_r)_{\F_q}$, with $a_i \in \F_{q^n}$.

When applying field reduction, a point $(a_1,\ldots,a_r)_{\F_{q^n}}$ in $\PG(r-1,q^n)$ corresponds to the $(n-1)$-space $$\{  ( a_1 x ,\ldots,  a_r x)_{\F_q} \mid x \in \F_{q^n}\}$$ of $\PG(rn-1,q)$.

Moreover, every $(n-1)$-space in $\PG(rn-1,q)$ can be represented in the following way. Let $a_1,\ldots,a_r$ be $\Fq$-linear maps from $\Fqn$ to itself. Then the set $$\{(a_1(x),\ldots,a_r(x))_{\Fq} \mid x \in \Fqn\}$$ corresponds to an $(n-1)$-space of $\PG(rn-1,q)$. When choosing a basis for $\F_{q^n}\cong\F_q^n$ over $\F_q$, the $\F_q$-linear map $a_i, i=1,\ldots,r$, is represented by an $n\times n$-matrix $A_i, i=1,\ldots,r,$ over $\F_q$ acting on row vectors of $\F_q^n$ from the right. We abuse notation and write the corresponding $(n-1)$-space of $\PG(rn-1,q)$ as
\[
(A_1,\ldots,A_r) := \{(x A_1,\ldots,x A_r)_{\Fq} \mid x \in \F_q^n\}.
\]

\begin{definition}
We call a point set of $\PG(k-1,q)$, a point set {\em in general position}, if every subset of $k$ points spans the full space. A {\em frame} of $\PG(k-1,q)$ is a set of $k+1$ points in general position.
Equivalently, a set of $(n-1)$-spaces in $\PG(kn-1,q)$ such that any $k$ span the full space, is called a set of $(n-1)$-spaces {\em in general position}.\end{definition}

Note that for any set $\K$ of $k+1$ $(n-1)$-spaces $S_i$, $i=0,\ldots,k$, of $\PG(kn-1,q)$, $k>2$, in general position, there exists a field reduction map $\mathcal{F}$ from $\PG(k-1,q^n)$ to $\PG(kn-1,q)$, such that $\K$ is contained in the Desarguesian spread of $\PG(kn-1,q)$ defined by $\mathcal{F}$. This means $\K$ is the field reduction of a frame in $\PG(k-1,q^n)$. This is equivalent with saying that we can choose coordinates for $\PG(kn-1,q)$ such that
\begin{align*}
\K &= \left\{  S_0= (I, I,I, \ldots,I, I) , S_1= (I, 0,0, \ldots,0, 0),  \right. \\
& \hspace{1cm}\left. S_2= (0,I, 0, \ldots,0, 0),
\ldots,  S_k= (0,0,0, \ldots, 0,I)\right\},
\end{align*}
where $I$ denotes the identity map or the identity matrix.

\subsection{Quasifields and spread sets}\label{quasifieldsection}
Andr\'e \cite{Andre} and Bruck and Bose \cite{Br} obtained that finite translation planes and $(n-1)$-spreads of $\PG(2n-1,q)$ are equivalent objects (this connection and a more general connection with translation Sperner spaces will be considered in Section \ref{sperner}).
We will now consider their correspondence with finite quasifields and matrix spread sets.
For a more general overview than given here, we refer to \cite{BrB1966,Dembowski,HandbookTranslationPlanes}.

\begin{definition}
A {\em finite (right) quasifield} $(Q,+,*)$ is a structure, where $+$ and  $*$ are binary operations on $Q$, satisfying the following axioms:
\begin{enumerate}\setlength{\itemsep}{-1pt}
\item[$(i)$] $(Q,+)$ is a group, with identity element 0,
\item[$(ii)$] $(Q_0=Q \setminus \{0\},*)$ is a multiplicative loop with identity 1, i.e.\
$\forall a\in Q: 1* a= a* 1=a$ and $\forall a,b\in Q_0: a*x=b \mbox{ and } y*a=b$ have unique solutions $x,y\in Q$,
%\item $\forall a \in Q: 0\cdot a=0$,
%\item if $a\cdot b=0$, then either $a=0$ or $b=0$,
%!!!!!!!\item $\forall a,b\in Q, a\neq 0$, there exist exactly one $x\in Q$ such that $a * x =b$,
\item[$(iii)$] right distributivity: $\forall a,b,c \in Q: (a+b) * c=a * c+ b* c$,
%\item $\forall a,b,c \in Q, a\neq b: a \cdot x=b \cdot x+c$ has exactly one solution.
\item[$(iv)$] $\forall a,b,c \in Q, a\neq b: x*a=x*b+c$ has a unique solution $x\in Q$.
\end{enumerate}
\end{definition}
In this paper, we will omit the term finite.

%A left quasifield also satisfying right distributivity is called a {\em semifield}.
%A {\em (left) nearfield} is a quasifield satisfying associativity for multiplication, i.e. $\forall a,b,c \in Q: (a* b)* c = a *(b* c)$.
%A quasifield which is both a semifield and a nearfield is a {\em (finite) field}.

\begin{definition}
%A subfield $K(Q)$ of a quasifield $Q$ is called the {\em kernel} of $Q$ if for all $ k \in K(Q)$ we have $$\forall x,y \in Q: k*(x*y)=(k*x)*y \, \, \mbox{ and }\, \, k*(x+y)=k*x+k*y\}.$$
The {\em kernel} $K(Q)$ of a quasifield $(Q,+,*)$ is the set of all $ k \in Q$ satisfying
 \begin{align*}
 \forall x,y \in Q: \, \, & k*(x*y)=(k*x)*y, \,\mbox{ and }\\
 \forall x,y \in Q: \, \, & k*(x+y)=k*x+k*y.
 \end{align*}
%The {\em centre} of a quasifield $Q$ is defined as $Z(Q) = \{a \in K(Q) \mid\forall b \in Q, a*b=b*a \}$.
Note that the kernel $K(Q)$ of a quasifield $Q$ is a field.
\end{definition}\index{kernel of a quasifield}

\begin{definition}
A {\em (matrix) spread set} is a family $\mathbf{M}$ of $q^n$ $n\times n$-matrices over $\F_q$ such that, for every two distinct $A,B\in \mathbf{M}$, the matrix $A-B$ is non-singular.
\end{definition}
Now, denote the points of $\PG(2n-1,q)$ by $\{(x,y)_{\F_q} \mid x,y\in\F_q^n, (x,y)\neq(0,0)\}$.
Given a spread set $\mathbf{M}$, consider
$$\S(\mathbf{M})=\left\{ E_A \mid A\in \mathbf{M}\right\} \cup \left\{ E_\infty \right\},$$
where
$$E_A=(I, A)=\{(x, xA)_{\F_q}\mid x\in\F_q^n\} \,\, \mbox{ and }\, \,  E_\infty=(0,I) =\{(0,x)_{\F_q} \mid x\in\F_q^n\}.$$

One can check that $\S(\mathbf{M})$ is an $(n-1)$-spread of $\PG(2n-1,q)$. Moreover, every $(n-1)$-spread of $\PG(2n-1,q)$ is equivalent (under a collineation of the space) to a spread of the form $\S(\mathbf{M})$, for some spread set $\mathbf{M}$, such that the zero matrix $0$ and identity matrix $I$ are both contained in $\mathbf{M}$ (see \cite[Section 5]{Br}).

Consider now the vector space $V=V(n,q)$ and take a non-zero vector $e$ of $V$. For every vector $y\in V$, there exists a unique matrix $M_y\in\mathbf{M}$ such that $y=eM_y$. Define multiplication $*$ in $V$ by
$$x*y=x M_y.$$

By \cite[Section 6]{Br}, using this multiplication and the original addition, $V$ becomes a right quasifield $\mathcal{Q}_e(\mathbf{M})=(V,+,*)$. Conversely, given a quasifield $\mathcal{Q}$ with multiplication $*$ on $V$ and $\F_q$ in its kernel, we can define a spread set $\Mb(\mathcal{Q}) = \{M_y \mid y \in V\}$, where $M_y$ is defined by $\forall x \in V: x M_y = x*y$. Clearly, $\mathcal{Q}=\mathcal{Q}_e(\mathbf{M}(\mathcal{Q}))$.

\begin{definition}
A {\em semifield} is a right quasifield also satisfying left distributivity.\index{semifield}
%\end{definition}
%\begin{definition}
A {\em (right) nearfield} $\mathcal{Q}$ \index{nearfield} is a (right) quasifield that satisfies associativity for multiplication, i.e.\ $\forall a,b,c \in \mathcal{Q}: (a* b)* c = a *(b* c)$.
\end{definition}
Note that a quasifield which is both a semifield and a nearfield is a {\em (finite) field}.

\begin{theorem}{\rm \cite[Section 11]{BrB1966}}\label{SemiNear}

The quasifield $\mathcal{Q}_e(\mathbf{M})$ is a nearfield if and only if $\mathbf{M}$ is closed under multiplication.

The quasifield $\mathcal{Q}_e(\mathbf{M})$ is a semifield if and only if $\mathbf{M}$ is closed under addition.
\end{theorem}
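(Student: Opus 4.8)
The plan is to reduce each closure property of the spread set $\mathbf{M}$ to the corresponding identity for the assignment $y \mapsto M_y$, using the single structural fact that this assignment inverts the bijection $\mathbf{M} \to V$, $M \mapsto eM$. So I would first record that this map is a bijection: since $\mathbf{M}$ is a spread set, $eM = eN$ with $M \neq N$ would force the non-singular matrix $M - N$ to have the nonzero vector $e$ in its kernel, a contradiction; hence $M \mapsto eM$ is injective, and as $|\mathbf{M}| = q^n = |V|$ it is bijective, with inverse $y \mapsto M_y$. In particular every element of $\mathbf{M}$ is $M_y$ for a unique $y \in V$ (namely $y = eM$), the multiplication being $x * y = x M_y$ with $e * y = y$.

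For the semifield assertion, recall that $\mathcal{Q}_e(\mathbf{M})$ is already known to be a right quasifield, so it is a semifield exactly when it satisfies left distributivity $a * (b+c) = a*b + a*c$; since the two sides are $a M_{b+c}$ and $a(M_b + M_c)$, this holds for all $a, b, c \in V$ if and only if $M_{b+c} = M_b + M_c$ for all $b, c \in V$. If $\mathbf{M}$ is closed under addition, then $M_b + M_c \in \mathbf{M}$ and $e(M_b + M_c) = e M_b + e M_c = b + c$, so by the uniqueness defining $M_{b+c}$ we get $M_{b+c} = M_b + M_c$, and $\mathcal{Q}_e(\mathbf{M})$ is a semifield. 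Conversely, if $\mathcal{Q}_e(\mathbf{M})$ is a semifield, any $A, B \in \mathbf{M}$ may be written $A = M_b$, $B = M_c$, and left distributivity yields $A + B = M_b + M_c = M_{b+c} \in \mathbf{M}$.

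The nearfield assertion is handled the same way, with matrix multiplication replacing addition. Here $(a*b)*c = (a M_b) M_c = a(M_b M_c)$ and $a*(b*c) = a M_{b*c}$ where $b*c = b M_c$, so associativity of $*$ holds for all $a, b, c$ if and only if $M_b M_c = M_{b*c}$ for all $b, c \in V$. If $\mathbf{M}$ is closed under multiplication, then $M_b M_c \in \mathbf{M}$ and $e(M_b M_c) = (e M_b) M_c = b M_c = b * c$, so by uniqueness $M_b M_c = M_{b*c}$ and $\mathcal{Q}_e(\mathbf{M})$ is a nearfield. Conversely, if $\mathcal{Q}_e(\mathbf{M})$ is a nearfield, then for $A = M_b$ and $B = M_c$ in $\mathbf{M}$ associativity gives $AB = M_b M_c = M_{b*c} \in \mathbf{M}$.

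I do not expect a genuine obstacle here; the one point that must be got right is the bijectivity of $M \mapsto eM$ on $\mathbf{M}$, since it is precisely what lets one write every element of $\mathbf{M}$ as $M_y$ for some $y \in V$ and thus translate the closure properties of $\mathbf{M}$ into the quasifield axioms and back (the passage from an identity of $*$ holding for all $a$ to the equality of the underlying matrices being immediate, as a matrix is determined by its action on all vectors).
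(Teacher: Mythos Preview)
Your argument is correct. The paper itself does not give a proof of this theorem; it simply cites \cite[Section 11]{BrB1966} and states the result, so there is no ``paper's own proof'' to compare against. What you have written is precisely the standard argument: the key observation is that $M \mapsto eM$ is a bijection $\mathbf{M} \to V$ (because $\mathbf{M}$ is a spread set and $|\mathbf{M}| = |V|$), so every matrix of $\mathbf{M}$ is some $M_y$, and then left distributivity of $*$ is equivalent to $M_{b+c} = M_b + M_c$ while associativity of $*$ is equivalent to $M_b M_c = M_{b*c}$; each of these identities is in turn equivalent to the corresponding closure property of $\mathbf{M}$ via the uniqueness half of the bijection. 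There is no gap.
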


We omit the correspondence to translation planes and conclude with the following connections (see \cite{Dembowski,HandbookTranslationPlanes}).

\begin{table}[h!]
\begin{tabular}{lll}
$\S$ is a {\em nearfield spread} &$\Leftrightarrow$& $\S\cong\S(\Mb)$ with $\mathbf{M}$ closed under multiplication;\\
$\S$ is a {\em semifield spread} &$\Leftrightarrow$&  $\S\cong\S(\Mb)$ with $\mathbf{M}$ closed under addition;\\
$\S$ is a Desarguesian spread &$\Leftrightarrow$& $\S\cong\S(\Mb)$ with $\mathbf{M}$ closed under multiplication and addition.
\end{tabular}
\end{table}

Equivalent to the previous, we say $\mathbf{M}$ is a {\em nearfield spread set}, respectively {\em semifield spread set} and {\em Desarguesian spread set}.

\section{Spreads of $\PG(rn-1,q)$ containing $r$ normal elements in general position}\label{r normal elements}

We denote the points of $\PG(rn-1,q)$ by $\{(x_1,\ldots,x_r)_{\F_q} \mid x_i\in\F_{q^n}\}$.
Consider a spread set $\mathbf{M}$, containing $0$ and $I$. We define the following $(n-1)$-spread
$$\S_r(\mathbf{M})= \left\{(A_1, A_2 , \ldots, A_r ) \mid  A_i \in \mathbf{M}, \mbox{ every first non-zero matrix } A_k=I\right\}$$
in $\PG(rn-1,q)$. If $\mathbf{M}$ is a nearfield spread set, then one can check that $\S_r(\mathbf{M})$ has $r$ normal elements $S_i,i=1,\ldots,r$, namely
$$S_1= (I, 0,0, \ldots, 0), S_2= (0,I, 0, \ldots, 0),\ldots,  S_r= (0,0,0, \ldots, 0,I).$$
This follows since $\langle S_i,(A_1, A_2 , \ldots, A_r )\rangle$ is partitioned by the elements
$$\{(A_1,\ldots,A_{i-1},B,A_{i+1},\ldots, A_r ) \mid B \in \mathbf{M}\} \cup \{S_i\}.$$
Moreover, in this case, since $\mathbf{M}$ is closed under multiplication, we can simplify notation such that
$$\S_r(\mathbf{M})= \left\{(A_1, A_2 , \ldots, A_r ) \mid  A_i \in \mathbf{M}\right\}.$$
\begin{theorem}\label{NearfieldSpread}
An $(n-1)$-spread $\S$ in $\PG(rn-1,q), r>2$, having $r$ normal elements in general position is projectively equivalent to $\S_r(\mathbf{M})$, for some nearfield spread set $\mathbf{M}$.
\end{theorem}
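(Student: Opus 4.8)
The plan is to start from the $r$ normal elements in general position and use field reduction to fix coordinates. By the remark following the definition of a frame, after a projective transformation we may assume the $r$ normal elements are $S_1 = (I,0,\ldots,0)$, $S_2 = (0,I,0,\ldots,0),\ldots, S_r = (0,0,\ldots,0,I)$, all lying in a common Desarguesian spread $\D$ of $\PG(rn-1,q)$ arising from field reduction of $\PG(r-1,q^n)$. (Here one should be slightly careful: the remark as stated concerns $k+1$ spaces spanning $\PG(kn-1,q)$, so with $k = r$ we are free to complete $\{S_1,\ldots,S_r\}$ to a frame by adjoining $S_0 = (I,I,\ldots,I)$, or simply invoke the weaker statement that any $r$ spaces in general position in $\PG(rn-1,q)$ can be simultaneously normalised to this standard form.)

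Next I would analyse what the normality of $S_1$ forces. Take an arbitrary element $F \in \S$ not meeting the coordinate subspace $\langle S_2,\ldots,S_r\rangle$; then $\langle S_1, F\rangle$ is a $(2n-1)$-space partitioned by elements of $\S$. Writing $F = \{(f_1(x),\ldots,f_r(x)) : x \in \F_{q^n}\}$ for $\F_q$-linear maps $f_i$, the condition that $\S$ induces a spread on $\langle S_1, F\rangle$ (equivalently, that $\S/S_1$ is a spread in the quotient) should let me conclude that $F$ can be written in the form $(g_1(x_1),\ldots,g_r(x_1))$ after an appropriate reparametrisation — i.e., that the "last $r-1$ coordinates" of $F$ are determined by the first, and moreover that as $F$ ranges over a suitable subfamily the induced maps form a spread set. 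Doing this for each $S_i$ in turn, and combining, should show that every element of $\S$ has the shape $(xA_1, xA_2,\ldots,xA_r)$ with the matrices $A_i$ drawn from a single family $\mathbf{M}$ of $n\times n$ matrices over $\F_q$, and that $\mathbf{M}$ is a spread set (the non-singularity of differences coming from the fact that two distinct spread elements are disjoint). After rescaling so that the first nonzero block is $I$, this realises $\S$ as $\S_r(\mathbf{M})$.

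Finally, I would extract the multiplicative closure of $\mathbf{M}$. This is where the hypothesis $r > 2$ and the use of \emph{several} normal elements genuinely enters: having normalised $\S = \S_r(\mathbf{M})$, I look at a spread element of the form $(I, A, 0, \ldots,0)$ (or $(0,I,A,0,\ldots,0)$, using $r\ge 3$) and apply normality of $S_2$ (respectively of $S_3$, etc.) to it. The quotient by $S_2$ must again be a spread, and reading off the induced partition forces products $AB$ with $A,B \in \mathbf{M}$ to lie back in $\mathbf{M}$; conversely nonsingularity of differences of such products is automatic. By Theorem \ref{SemiNear}, closure of $\mathbf{M}$ under multiplication is precisely the statement that $\mathbf{M}$ is a nearfield spread set, which completes the proof.

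I expect the main obstacle to be the bookkeeping in the middle step: showing rigorously that normality of the $S_i$ forces \emph{every} element of $\S$ simultaneously into the form $(xA_1,\ldots,xA_r)$ with all blocks coming from one common spread set $\mathbf{M}$, rather than $r$ a priori different spread sets. The key point to nail down is that the reparametrisations forced by the different $S_i$ are compatible — i.e., that after using $S_1$ to write elements as $(x, xB_2,\ldots,xB_r)$, the further constraint from $S_2$ pins down the $B_i$ to the same family and does not merely impose an independent linear condition. This compatibility is exactly what fails when only $r-1$ normal elements are available (which is why the $r$-vs-$(r-1)$ distinction is a genuine dividing line in the paper), so the argument must use all $r$ of them in an essential, interlocking way.
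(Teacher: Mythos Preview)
Your outline has the right shape --- normalise the $r$ normal elements to the standard positions $S_i$, use normality to pin down coordinates of all spread elements, and then extract multiplicative closure --- but the concrete mechanism you propose for the last step does not actually produce products, and this is where the proposal has a genuine gap rather than merely unwritten bookkeeping.

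Applying normality of $S_2$ to an element such as $(I,A,0,\ldots,0)$ tells you that $\langle S_2,(I,A,0,\ldots,0)\rangle$ is partitioned by spread elements; but this $(2n-1)$-space lies entirely inside $\langle S_1,S_2\rangle$, so all you recover are further elements $(I,B,0,\ldots,0)$ of the spread you already know --- no matrix products $AB$ appear. Likewise for $(0,I,A,0,\ldots,0)$ and $S_3$. To force a product you must involve a spread element with nontrivial blocks in at least two ``non-pivot'' coordinates simultaneously, and your plan never generates such an element.

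The paper resolves this and your ``main obstacle'' (that all blocks come from one common spread set) in a single stroke, working first in the base case $r=3$. Write the induced spreads on the three $(2n-1)$-spaces via spread sets $\mathbf{M_1},\mathbf{M_2},\mathbf{M_3}$. Using normality of $S_2$ and $S_3$, every spread element outside $\langle S_2,S_3\rangle$ is realised as an intersection
\[
(I,B,C^{-1}) \;=\; \langle S_2,\,(C,0,I)\rangle \,\cap\, \langle S_3,\,(I,B,0)\rangle,\qquad B\in\mathbf{M_2},\ C\in\mathbf{M_3}.
\]
Now use normality of $S_1$: projecting $(I,B,C^{-1})$ from $S_1$ onto $\langle S_2,S_3\rangle$ yields $(0,BC,I)$, which must therefore lie in $\S$, i.e.\ $BC\in\mathbf{M_1}$. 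Taking $C=I$ gives $\mathbf{M_2}\subseteq\mathbf{M_1}$, taking $B=I$ gives $\mathbf{M_3}\subseteq\mathbf{M_1}$, hence all three coincide, and the general case gives closure of $\mathbf{M_1}$ under multiplication. The passage to arbitrary $r>3$ is then a routine induction: intersect $\S$ with two $((r-1)n-1)$-subspaces spanned by $r-1$ of the $S_i$ that overlap in a $((r-2)n-1)$-subspace, so the two nearfield spread sets obtained by induction must agree.

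So the missing idea is the \emph{projection from one normal element onto the span of two others}; this is precisely what interlocks the three normality hypotheses and makes matrix products appear.
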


\begin{proof}
Suppose $r=3$, the points of $\PG(3n-1,q)$ correspond to the set $\{(x,y,z)_{\F_q} \mid x,y,z\in\F_{q^n}, (x,y,z)\neq(0,0,0)\}$.
Consider an $(n-1)$-spread $\S$ of $\PG(3n-1,q)$ having normal elements $S_1,S_2,S_3$ in general position.
Without loss of generality, we may assume that $S_1=(I,0,0), S_2=(0,I,0)$ and $S_3=(0,0,I)$.

As $S_1,S_2$ and $S_3$ are normal elements, the intersection of $\S$ with the $(2n-1)$-spaces $\langle S_1,S_2\rangle$, $\langle S_1,S_3\rangle$ and $\langle S_2,S_3\rangle$ are $(n-1)$-spreads. Hence, there exist spread sets $\mathbf{M_1}, \mathbf{M_2}$ and $\mathbf{M_3}$ (all containing 0 and $I$) such that
\begin{eqnarray*}
\S \cap \langle S_2,S_3\rangle &=& \{P_{A}=(0,A,I) \mid A \in \mathbf{M_1}\}\cup \{ S_2=P_\infty=(0,I,0)\},\\
\S \cap \langle S_1,S_2\rangle &=& \{Q_{B}=(I,B,0) \mid B \in \mathbf{M_2}\}\cup \{ S_2=Q_\infty=P_\infty=(0,I,0)\},\\
\S \cap \langle S_1,S_3\rangle &=& \{R_{C}=(C,0,I) \mid C \in \mathbf{M_3}\}\cup \{ S_1=R_\infty=(I,0,0)\}.
\end{eqnarray*}

As $S_2$ and $S_3$ are normal elements of $\S$, we can obtain every element of $\S$, not on $\langle S_2,S_3\rangle$, as
$$(I,B,C^{-1}) = \langle S_2, R_{C}\rangle \cap \langle S_3, Q_{B}\rangle,$$
with $B\in \mathbf{M_2}$ and $C\in \mathbf{M_3}$.
For any $B\in \mathbf{M_2}$ and $C\in \mathbf{M_3}$, consider the following projections of elements of $\S$ from $S_1$ onto $\langle S_2,S_3\rangle$:
\begin{eqnarray*}
(0,B,I)&=&\langle S_1, (I,B,I)\rangle\cap\langle S_2,S_3\rangle,\\
(0,C,I)&=& \langle S_1,(I,I,C^{-1})\rangle\cap\langle S_2,S_3\rangle,\\
(0,BC,I) &=&\langle S_1, (I,B,C^{-1})\rangle\cap\langle S_2,S_3\rangle.
\end{eqnarray*}
As $S_1$ is a normal element, these subspaces are all contained in $\S$. From the first two, it follows that $\mathbf{M_2}$ and $\mathbf{M_3}$ are contained in $\mathbf{M_1}$, hence $\mathbf{M_1}=\mathbf{M_2}=\mathbf{M_3}$.
Using the third, we find that $BC\in\mathbf{M_1}$, i.e. $\mathbf{M_1}$ is closed under multiplication.
By Theorem \ref{SemiNear}, we conclude that the spread $\S \cap \langle S_2,S_3\rangle$ (and thus also $\S \cap \langle S_1,S_2\rangle$ and $\S \cap \langle S_1,S_3\rangle$) is a nearfield spread.

The result now follows for $r=3$, since we have obtained that
$$\S=\S_3(\mathbf{M_1})= \left\{(A_1, A_2 , A_3 ) \mid  A_i \in \mathbf{M_1}\right\},$$
where $\mathbf{M_1}$ is a nearfield spread set.

By induction, suppose the result is true for $r=t-1\geq 3$. We will now prove it is true for $r=t$.
Consider an $(n-1)$-spread $\S$ of $\PG(tn-1,q)$ having $t$ normal elements $S_1,\ldots,S_t$ in general position. Without loss of generality, we may assume $$S_1= (I, 0,0, \ldots, 0), S_2= (0,I, 0, \ldots, 0),\ldots,  S_t= (0,0,0, \ldots, 0,I).$$

Consider the $((t-1)n-1)$-subspaces $\Pi_1=\langle S_2, S_3,\ldots,S_{t}\rangle$ and $\Pi_2=\langle S_1, S_3,\ldots,S_{t}\rangle$. Clearly, $\Pi_1$ corresponds to the points with coordinates  $\{(0,x_2,\ldots,x_t)_{\F_q} \mid x_i\in\F_{q^n}\}$ and $\Pi_2$ corresponds to the points with coordinates  $\{(x_1,0,x_3,\ldots,x_r)_{\F_q} \mid x_i\in\F_{q^n}\}$.  Since all $S_j$ are normal elements, we have that $\S_{i}=\S\cap\Pi_i$ is an $(n-1)$-spread of $\Pi_i$ containing $t-1$ normal elements in general position. By the induction hypothesis, there exist nearfield spread sets $\mathbf{M_1}$ and $\mathbf{M_2}$ such that
\begin{align*}
\S_1 =&\, \left\{(0, A_2 ,A_3 , \ldots, A_t ) \mid  A_i \in \mathbf{M_1}\right\}, \mbox{ and }\\
\S_2 =&\, \left\{(A_1, 0,A_3 , \ldots, A_t ) \mid  A_i \in \mathbf{M_2}\right\}.
\end{align*}

The spreads $\S_1$ and $\S_2$ overlap in the $((t-2)n-1)$-space $\Pi_1\cap\Pi_2$, hence we find that $\mathbf{M_1}=\mathbf{M_2}$.

All elements of $\S$, not on $\langle S_1,S_2\rangle$, are of the form $\langle S_1, U\rangle\cap\langle S_2,V\rangle$, for some $U\in\S_1,V\in\S_2$. To find the coordinates of the elements of $\S\cap\langle S_1, S_2\rangle$, we can consider the projection of $\S$ from $S_t$ onto $\langle S_1,S_2,\ldots,S_{t-1}\rangle$. We obtain that
$$\S= \left\{(A_1, A_2 , \ldots, A_t ) \mid  A_i \in \mathbf{M_1}\right\}.$$\end{proof}
\section{Characterising Desarguesian spreads}\label{r+1 normal elements}
In this section, we provide two characterisations of Desarguesian spreads, one dependent and one independent of $n$ and $q$. We first need the characterisation of finite nearfields.
\begin{definition}
A pair of positive integers $(q, n)$ is called a {\em Dickson number pair} if it satisfies the following relations:
\begin{enumerate}
\item[(i)] $q = p^h$ for some prime $p$,
\item[(ii)] each prime divisor of $n$ divides $q - 1$,
\item[(iii)] if $q \equiv 3 \mod 4$ , then $n \not\equiv 0 \mod 4$.
\end{enumerate}
\end{definition}

By \cite{EllersKarzel} and \cite{Zassenhaus}, there is a uniform method for constructing a finite nearfield of order $q^n$, with centre and kernel isomorphic to $\F_q$, whenever $(q, n)$
is a Dickson number pair. Such a nearfield is called a {\em Dickson nearfield} or a {\em regular nearfield}.
Moreover, by  \cite{Zassenhaus}, apart from seven exceptions, every finite nearfield, which is not a field, is a Dickson nearfield.
These seven nearfield exceptions have parameters $n=2$ and $q\in \{5,7,11,23,29,59\}$; note that there are two non-equivalent non-regular nearfields for $(q,n)=(11,2)$.

%\begin{theorem}
%Consider an $(n-1)$-spread $\S$ in $\PG(rn-1,q)$, $r>2$, such that $\{n,q\}$ is not a Dickson number pair or one of the seven nearfield exceptions.
If $\S$ contains $r$ normal elements in general position, then $\S$ is a Desarguesian spread.
%\end{theorem}
\begin{theorem}
Consider an $(n-1)$-spread $\S$ in $\PG(rn-1,q)$, $r>2$, such that for every divisor $k | n$, we have that $(q^k,\frac{n}{k})$ is not a Dickson number pair and does not correspond to the parameters of one of the seven nearfield exceptions.
If $\S$ contains $r$ normal elements in general position, then $\S$ is a Desarguesian spread.
\end{theorem}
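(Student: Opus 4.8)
The plan is to reduce the statement to the classification of finite nearfields. Since $r>2$ and $\S$ has $r$ normal elements in general position, Theorem \ref{NearfieldSpread} shows that $\S$ is projectively equivalent to $\S_r(\mathbf{M})$ for some nearfield spread set $\mathbf{M}$ of $n\times n$ matrices over $\F_q$. Write $\mathcal{Q}=\mathcal{Q}_e(\mathbf{M})$ for the associated nearfield: it has order $q^n$ and $\F_q$ lies in its kernel $K(\mathcal{Q})$. It then suffices to prove that $\mathcal{Q}$ is a field, because in that case $\mathbf{M}=\mathbf{M}(\mathcal{Q})$ is closed under both addition and multiplication (a field being simultaneously a semifield and a nearfield, cf.\ Theorem \ref{SemiNear}), so $\mathbf{M}$ is a Desarguesian spread set and $\S_r(\mathbf{M})$ is, by construction, the Desarguesian spread obtained by field reduction from $\PG(r-1,q^n)$.

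First I would determine the size of the kernel. As $\mathcal{Q}$ is a left vector space over the field $K(\mathcal{Q})$ and $\F_q\subseteq K(\mathcal{Q})$, a counting argument forces $|K(\mathcal{Q})|=q^{k}$ for some divisor $k\mid n$, with $\dim_{K(\mathcal{Q})}\mathcal{Q}=n/k$. Assume now, towards a contradiction, that $\mathcal{Q}$ is not a field; then $K(\mathcal{Q})\subsetneq\mathcal{Q}$, so $n/k\geq 2$. By Zassenhaus \cite{Zassenhaus}, $\mathcal{Q}$ is either a Dickson (regular) nearfield or one of the seven exceptional nearfields. If $\mathcal{Q}$ is a Dickson nearfield, then by \cite{EllersKarzel,Zassenhaus} it is the one constructed from a Dickson number pair $(q_0,n_0)$, and since that construction produces kernel of order $q_0$ and total order $q_0^{n_0}$, we get $q_0=|K(\mathcal{Q})|=q^{k}$ and $n_0=n/k$; hence $(q^{k},n/k)$ is a Dickson number pair. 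If instead $\mathcal{Q}$ is one of the seven exceptions, then its kernel has order $q^{k}\in\{5,7,11,23,29,59\}$ and $n/k=2$, so $(q^{k},n/k)$ is one of the seven exceptional parameter pairs. Either way the divisor $k\mid n$ contradicts the hypothesis on $\S$. Therefore $\mathcal{Q}$ is a field, and $\S$ is Desarguesian.

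The only genuinely non-routine point is the identification, used in the contradiction step, of the defining parameters of a non-field nearfield $\mathcal{Q}$ with $(|K(\mathcal{Q})|,\dim_{K(\mathcal{Q})}\mathcal{Q})$; this rests on the facts recalled in the preliminaries that the Dickson nearfield built from a Dickson pair $(q_0,n_0)$ has centre and kernel of order exactly $q_0$, and that each of the seven exceptional nearfields has kernel of order $q_0\in\{5,7,11,23,29,59\}$ and dimension $2$ over it. Everything else — Theorem \ref{NearfieldSpread}, the elementary count giving $|K(\mathcal{Q})|=q^{k}$ with $k\mid n$, and the field-reduction description of $\S_r(\mathbf{M})$ when $\mathbf{M}$ corresponds to a field — is either quoted or a direct verification.
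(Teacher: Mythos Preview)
Your proof is correct and follows essentially the same approach as the paper: apply Theorem~\ref{NearfieldSpread} to obtain a nearfield spread set, then invoke the Zassenhaus classification to conclude that the associated nearfield (having kernel some $\F_{q^k}$ with $k\mid n$) must be a field under the stated hypothesis. The paper's own argument is a terse three-line version of exactly this, so your write-up simply spells out the details that the paper leaves implicit.
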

\begin{proof}
By Theorem \ref{NearfieldSpread} the spread $\S$ is projectively equivalent to $\S_r(\mathbf{M})$, for a nearfield spread set $\mathbf{M}$.  By assumption, for every divisor $k|n$, a nearfield of order $q^n$, having kernel $\F_{q^k}$, is a field, hence $\mathbf{M}$ is a Desarguesian spread set. %By assumption, every nearfield of order $q^n$ is a field, hence $\mathbf{M}$ is a Desarguesian spread set. 
It follows that $\S$ is a Desarguesian spread.
\end{proof}

\begin{lemma}\label{UniqueDesSpread}{\rm \cite[Lemma 13]{WijEggs}}
Let $\D_0$ be a Desarguesian $(n-1)$-spread in a $((t-1)n-1)$-dimensional subspace $\Pi$ of $\PG(tn-1,q)$, let $T$ be an element of $\D_0$ and let $E_1$ and $E_2$ be mutually disjoint $(n-1)$-spaces such that $\langle E_1,E_2\rangle$ meets $\Pi$ exactly in the space $T$. Then there exists a unique Desarguesian $(n-1)$-spread of $\PG(tn-1,q)$ containing $E_1$, $E_2$ and all elements of $\D_0$.
\end{lemma}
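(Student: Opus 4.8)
The plan is to put the configuration into a single field-reduction picture and then show that this picture is forced. Throughout I take $t\ge 3$ (for $t\le 2$ the statement degenerates), and I use that $E_1,E_2\not\subseteq\Pi$, whence $E_1\cap\Pi=E_2\cap\Pi=0$: in any common Desarguesian spread $\Pi$ is a union of spread elements, so the subspace $E_i\cap\Pi$ of the single spread element $E_i$ must be $0$ or $E_i$. Write $V=\Fq^{tn}$ for the ambient space and $W$ for the $(t-1)n$-dimensional subspace underlying $\Pi$, so that $V=W\oplus E_1$ over $\Fq$.

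First I would normalise $\D_0$. Since $\D_0$ is a Desarguesian $(n-1)$-spread of $\Pi\cong\PG(t-2,q^n)$ with $t-1\ge 2$, its kernel $\K_W:=\K(\D_0)\subseteq\operatorname{End}_{\Fq}(W)$ is a field isomorphic to $\Fqn$, \emph{uniquely determined} by $\D_0$, with $\D_0$ equal to the set of $\K_W$-lines of $W$. I choose $\Fq$-coordinates so that $\K_W$ is the standard diagonal $\Fqn$ on $\Fqn^{t-1}=W$, $T=\{(x,0,\dots,0)_{\Fq}\mid x\in\Fqn\}$, and, adjoining a block spanning the complement $E_1$, $\Pi=\{(x_1,\dots,x_{t-1},0)_{\Fq}\}$ and $E_1=(0,\dots,0,I)$. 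Since $E_2\cap\Pi=0$, $E_2$ is a graph $\{(\gamma_1(y),\dots,\gamma_{t-1}(y),y)_{\Fq}\mid y\in\Fqn\}$ over the last block with $\Fq$-linear maps $\gamma_i$; computing $\langle E_1,E_2\rangle\cap\Pi$, which equals the image of $(\gamma_1,\dots,\gamma_{t-1})$, and setting it equal to $T$ forces $\gamma_2=\dots=\gamma_{t-1}=0$ and $\gamma:=\gamma_1\in\GL_n(\Fq)$.

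With the data in this normal form, a Desarguesian spread of $\PG(tn-1,q)$ containing $\D_0$ must be the spread of $\Fqn$-lines for some field structure on $V$ whose restriction to the first $t-1$ blocks has kernel $\K_W$; since $\K_W$ is the standard field, this structure has the shape $\lambda\cdot(x_1,\dots,x_t)=(\lambda^{q^j}x_1,\dots,\lambda^{q^j}x_{t-1},\mu_\lambda(x_t))$ for some fixed $j$ and some embedding $\lambda\mapsto\mu_\lambda$ of $\Fqn$ into $\operatorname{End}_{\Fq}(\Fq^n)$. Requiring $(0,\dots,0,I)$ to be a spread element imposes no condition, while requiring $E_2$ to be a spread element forces $\mu_\lambda=\gamma^{-1}\circ(y\mapsto\lambda^{q^j}y)\circ\gamma$, so that $\mu_\lambda$ is pinned down by $j$ and $\gamma$. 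For $j=0$ this structure exists, and one checks directly that its lines contain all of $\D_0$ together with $E_1$ and $E_2$ — giving existence; and as $j$ ranges over $0,\dots,n-1$ the resulting subfield of $\operatorname{End}_{\Fq}(V)$ is the same, since the twist is absorbed by relabelling $\Fqn$ via $\lambda\mapsto\lambda^{q^j}$, so the spread is unique.

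I expect the uniqueness half to be the real obstacle, and it comes down to two bookkeeping facts: that a Desarguesian spread containing $\D_0$ restricts on $W$ to the \emph{uniquely determined} kernel $\K(\D_0)$ — which is exactly why one needs $t\ge 3$, so that $\K(\D_0)\cong\Fqn$ rather than the full matrix algebra — and that the apparent $n$-fold Frobenius ambiguity in extending the structure to the last block collapses to a single spread once $E_2$ is required to be a spread element. Everything else is routine linear algebra together with the standard dictionary between Desarguesian spreads and field structures.
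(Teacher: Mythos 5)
Your proof is a valid standalone argument, and it is worth noting that the paper itself does not prove this lemma at all: it is quoted from \cite[Lemma 13]{WijEggs}, so there is no in-paper proof to match. Your route -- encode a Desarguesian spread by its kernel, a subfield of $\operatorname{End}_{\F_q}(V)$ isomorphic to $\F_{q^n}$ whose $1$-dimensional subspaces recover the spread; observe that the kernel of any Desarguesian spread containing $\D_0$ restricts on $W$ to the uniquely determined kernel of $\D_0$ (this is exactly where $t\geq 3$, i.e.\ $t-1\geq 2$, is needed); and then pin down the action on the complementary block using $E_1$ and $E_2$ -- is the standard dictionary between Desarguesian spreads and $\F_{q^n}$-structures, and it delivers both existence and uniqueness.

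Two points need repair, though neither is fatal. First, the reduction to $E_1\cap\Pi=E_2\cap\Pi=\emptyset$: your justification invokes a common Desarguesian spread, which is circular for the existence half, and disjointness from $\Pi$ genuinely does not follow from the hypotheses as stated. For instance in $\PG(5,q)$ (so $n=2$, $t=3$) take $\Pi=\langle e_1,e_2,e_3,e_4\rangle$, $T=\langle e_1,e_2\rangle$ in some Desarguesian spread of $\Pi$, $E_1=\langle e_1,e_5\rangle$ and $E_2=\langle e_2+e_5,e_6\rangle$: then $E_1,E_2$ are mutually disjoint and $\langle E_1,E_2\rangle\cap\Pi=T$, yet $E_1$ meets $T$ in a point without equalling it, so no spread of the whole space contains both $E_1$ and $T$ and existence fails. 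The intended hypothesis (present in the source and automatically satisfied in the paper's application, where $E_1,E_2$ are spread elements in general position with the elements spanning $\Pi$) is that $E_1,E_2$ are disjoint from $\Pi$; you should assume this rather than derive it. Second, containing $\D_0$ alone only forces each kernel element to map $W$ into itself, so the correct general shape is upper triangular, with a possible $W$-component depending on $x_t$; the block-diagonal ansatz $\lambda\cdot(x_1,\dots,x_t)=(\lambda^{q^j}x_1,\dots,\lambda^{q^j}x_{t-1},\mu_\lambda(x_t))$ is exactly equivalent to requiring $E_1=(0,\dots,0,I)$ to be invariant, i.e.\ to be a spread element. So your remark that this requirement ``imposes no condition'' should instead say that it is the condition already built into the ansatz. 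With these adjustments the argument is correct; the treatment of the Frobenius twist $j$ (the image subfield, hence the spread, is independent of the parametrisation) and the verification that the constructed structure contains $\D_0$, $E_1$, $E_2$ are fine.
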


\begin{theorem}
\label{r+1normal}
Consider an $(n-1)$-spread $\S$ in $\PG(rn-1,q)$, $r>2$.
If $\S$ contains $r+1$ normal elements in general position, then $\S$ is a Desarguesian spread.
\end{theorem}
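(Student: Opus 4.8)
The plan is to reduce to the nearfield case and then extract an algebraic identity from the extra normal element. Let $S_1,\dots,S_{r+1}$ be the given normal elements in general position; we may assume $n\ge 2$, since for $n=1$ every $(n-1)$-spread is the Desarguesian spread of points. Applying Theorem \ref{NearfieldSpread} to $S_1,\dots,S_r$ (and recalling that a collineation maps normal elements to normal elements), we may assume $\S=\S_r(\mathbf{M})$ for a nearfield spread set $\mathbf{M}$ containing $0$ and $I$, with $S_1,\dots,S_r$ the standard coordinate $(n-1)$-spaces and $S_{r+1}$ still normal. Writing $S_{r+1}=(A_1,\dots,A_r)$, the hypothesis that $S_{r+1}$ is in general position with $S_1,\dots,S_r$ forces each $A_i$ to be nonsingular, so $A_i\in\mathbf{M}\setminus\{0\}$ and $A_1=I$; then $(x_1,\dots,x_r)\mapsto(x_1,x_2A_2^{-1},\dots,x_rA_r^{-1})$ is a collineation which (using that $\mathbf{M}$ is closed under multiplication and inverses of nonzero elements) preserves $\S_r(\mathbf{M})$ and fixes every $S_i$ while sending $S_{r+1}$ to $(I,\dots,I)$. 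So I may assume $S_{r+1}=(I,\dots,I)$ is a normal element of $\S_r(\mathbf{M})$.

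Now I would exploit this normality. For each $B\in\mathbf{M}$ set $F=(I,B,0,\dots,0)\in\S$, so that $\langle S_{r+1},F\rangle$ is a $(2n-1)$-space which, by normality of $S_{r+1}$, is partitioned by the elements of $\S$ it contains. A direct computation should show that these elements are exactly the spaces $(D_1,D_2,D_3,\dots,D_3)$ — with $D_3$ in positions $3,\dots,r$ — where $D_i\in\mathbf{M}$ and $D_2=D_3+(D_1-D_3)B$; they are parametrised by the pairs $(D_1,D_3)\in\mathbf{M}^2$ with $D_3+(D_1-D_3)B\in\mathbf{M}$, modulo left multiplication by $\mathbf{M}\setminus\{0\}$. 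Since distinct elements of the spread $\S$ are disjoint, they partition $\langle S_{r+1},F\rangle$ if and only if there are exactly $q^n+1$ of them, and a count then forces \emph{every} pair $(D_1,D_3)\in\mathbf{M}^2$ to satisfy $D_3+(D_1-D_3)B\in\mathbf{M}$. Taking $D_1=0$ and $D_3=I$ gives $I-B\in\mathbf{M}$ for every $B\in\mathbf{M}$. Choosing $B\in\mathbf{M}\setminus\{0,I\}$ (possible since $|\mathbf{M}|=q^n\ge 4$), both $B$ and $I-B$ are invertible elements of $\mathbf{M}$, so $\mathbf{M}B=\mathbf{M}(I-B)=\mathbf{M}$; rewriting $D_3+(D_1-D_3)B=D_3(I-B)+D_1B$ then gives $X+Y\in\mathbf{M}$ for all $X,Y\in\mathbf{M}$. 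Hence $\mathbf{M}$ is closed under addition, so by Theorem \ref{SemiNear} the associated quasifield is both a semifield and a nearfield, i.e.\ a field; thus $\mathbf{M}$ is a field spread set and $\S=\S_r(\mathbf{M})$ is Desarguesian.

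The step I expect to be the main obstacle is the second paragraph: identifying precisely which elements of $\S$ lie inside $\langle S_{r+1},F\rangle$ and counting them, keeping careful track of the convention that two matrix tuples determine the same element of $\S_r(\mathbf{M})$ exactly when they differ by left multiplication by a nonzero element of $\mathbf{M}$, so that the geometric statement ``$\langle S_{r+1},F\rangle$ is partitioned'' translates cleanly into the count $q^n+1$ and hence into the pointwise condition on $\mathbf{M}$. The reductions in the first paragraph, the convenient choice $F=(I,B,0,\dots,0)$, and the closing step that a nearfield spread set closed under addition is a field spread set should all be routine. (One could instead try to induct on $r$ by projecting from a normal element, but the projection of $S_{r+1}$ need not remain normal in the quotient spread, so I would not pursue that route.)
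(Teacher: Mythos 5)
Your proposal is correct, but it follows a genuinely different route from the paper. After the common first step (Theorem \ref{NearfieldSpread}, giving $\S=\S_r(\mathbf{M})$ for a nearfield spread set $\mathbf{M}$), the paper handles $r=3$ by intersecting $\langle S_0,R_A\rangle$ with $\langle S_1,S_2\rangle$ to get $(I,I-A,0)\in\S$, and then treats general $r$ by induction: it restricts to $\Pi=\langle S_1,\ldots,S_{t-1}\rangle$, shows $T=\langle S_0,S_t\rangle\cap\Pi$ is a normal element of $\S\cap\Pi$, applies the induction hypothesis, and finishes with the unique-extension result Lemma \ref{UniqueDesSpread}. You instead give a uniform argument for all $r$ at once: after normalising $S_{r+1}=(I,\ldots,I)$ by a collineation (your use of the group structure of $\mathbf{M}\setminus\{0\}$ here is legitimate, and general position does force all $A_i$ nonsingular), you analyse the single $(2n-1)$-space $\langle S_{r+1},(I,B,0,\ldots,0)\rangle$. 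I checked your parametrisation: the elements of $\S$ contained in it are exactly the tuples $(D_1,D_3+(D_1-D_3)B,D_3,\ldots,D_3)$ with $D_1,D_3\in\mathbf{M}$ and $D_3+(D_1-D_3)B\in\mathbf{M}$, taken modulo the free left action of the group $\mathbf{M}\setminus\{0\}$, so the partition condition forces $(q^{2n}-1)/(q^n-1)=q^n+1$ orbits, i.e.\ every pair satisfies the membership condition; then $I-B\in\mathbf{M}$, and since $\mathbf{M}B=\mathbf{M}(I-B)=\mathbf{M}$ for $B\notin\{0,I\}$, the identity $D_3(I-B)+D_1B\in\mathbf{M}$ yields closure under addition, hence a field by Theorem \ref{SemiNear}. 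What your approach buys is the elimination of both the induction and Lemma \ref{UniqueDesSpread}, at the price of the careful counting/quotient bookkeeping inside one $(2n-1)$-space; the paper's approach is more geometric, derives the additive closure from a projection-style intersection of two $(2n-1)$-spaces, and leans on a known extension lemma rather than a count. (Your closing remark about projection is moot: the paper's induction restricts to a subspace rather than passing to a quotient, precisely avoiding the issue you anticipated.)
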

\begin{proof}
Suppose $r=3$, the points of $\PG(3n-1,q)$ correspond to the set $\{(x,y,z)_{\F_q} \mid x,y,z\in\F_{q^n}, (x,y,z)\neq(0,0,0)\}$.
Suppose $\S$ contains normal elements $S_1,S_2,S_3,S_4$ in general position.
Without loss of generality, we may assume that $S_0=(I,I,I)$, $S_1=(I,0,0)$, $S_2=(0,I,0)$ and $S_3=(0,0,I)$.

As $S_1,S_2,S_3$ are normal elements, by following the proof of Theorem \ref{NearfieldSpread}, we see that
$$\S=\S_3(\mathbf{M})=\{(A_1, A_2 ,A_3 ) \mid  A_i \in \mathbf{M}\},$$
for a nearfield spread set $\mathbf{M}$ (containing 0 and $I$).

Given $A\in\mathbf{M}\setminus\{0,I\}$, consider the spread element $R_A=(0,A,I) \in \S \cap \langle S_2,S_3\rangle$, and look at the element
$$\langle S_0, R_A \rangle\cap\langle S_1,S_2\rangle= \left(I,I-A,0\right).$$
As $S_0$ is a normal element for $\S$, this $(n-1)$-space is contained in $\S \cap \langle S_1,S_2\rangle$. It follows that the matrix $I-A \in \mathbf{M}$.

As $\mathbf{M}$ is closed under multiplication, for all $A,B\in\mathbf{M}$, we have $ B-BA\in\mathbf{M}$.
Given matrices $B,C\in\mathbf{M}$, there exists a unique $A\in\mathbf{M}$ for which $BA=C$. Hence, for all $B,C\in\mathbf{M}$, we find that $B-C= B-BA$ is contained in $\mathbf{M}$. It follows that $\mathbf{M}$ is also closed under addition, hence $\mathbf{M}$ is a Desarguesian spread set. We conclude that $\S$ is a Desarguesian spread.

By induction, suppose the result is true for $r=t-1$. We will now prove it is true for $r=t$.
Consider an $(n-1)$-spread $\S$ of $\PG(tn-1,q)$ having $t+1$ normal elements $S_0,\ldots,S_{t}$ in general position.
Consider the $((t-1)n-1)$-subspace $\Pi=\langle S_1,S_2, \ldots,S_{t-1}\rangle$. As all $S_j$ are normal, $\S\cap\Pi$ is an $(n-1)$-spread containing $t-1$ normal elements.

Consider the $(n-1)$-space $T=\langle S_{0},S_{t}\rangle\cap\Pi$, clearly $T\in\S\cap\Pi$.
Take an element $R \in \S\cap\Pi$ different from $T$ and consider the $(3n-1)$-space $\pi=\langle S_0, S_{t},R\rangle$. Note that the intersection $\Pi\cap\pi$ contains the elements $T$ and $R$. Since $S_0, S_{t}$ are normal elements, $\S\cap\pi$ is an $(n-1)$-spread. Both $\S\cap\Pi$ and $\S\cap\pi$ are $(n-1)$-spreads, hence $\S\cap(\Pi \cap \pi)=\S\cap\langle T,R\rangle$ is an $(n-1)$-spread. It follows that $T$ is a normal element for the spread $\S\cap\Pi$.

Since the elements $S_j$ lie in general position with respect to the full space $\PG(tn-1,q)$, the elements $S_1,S_2,\ldots,S_{t-1},T$ lie in general position with respect to $\Pi$. Hence, the spread $\S\cap\Pi$ contains $t$ normal elements in general position, and thus, by the induction hypothesis, $\S\cap\Pi$ is a Desarguesian spread.

By Lemma \ref{UniqueDesSpread}, there exists a unique Desarguesian spread $\D$ of $\PG(tn-1,q)$ containing $S_0,S_t$ and all elements of $\S\cap\Pi$. Every element of $\D$, not on $\langle S_0, S_{t}\rangle$, can be obtained as $\langle S_0, U \rangle \cap \langle S_{t}, V \rangle$, for some $U,V\in\S\cap\Pi$. Since $S_0$ and $S_{t}$ are normal elements of $\S$, all elements of $\D \setminus \langle S_0, S_{t}\rangle$ are elements of $\S$. Looking from the perspective from $S_1$, it is easy to see that the elements of $\D \cap \langle S_0, S_{t}\rangle$ are also elements of $\S$.

It follows that $\S=\D$ is a Desarguesian spread.
\end{proof}

\section{Spreads of $\PG(3n-1,q)$ containing 3 normal elements in a $(2n-1)$-space}\label{3 on a line}

In this section, we will characterise $(n-1)$-spreads of $\PG(3n-1,q)$ containing 3 elements contained in the same $(2n-1)$-space. First, we need to introduce some definitions and notations concerning the (restricted) closure of a point set and the field reduction of sublines and subplanes.

\subsection{The (restricted) closure of a point set}

\begin{definition}
An {\em $\F_{q_0^{}}$-subline} in $\PG(1,q^n)$, for a given subfield $\F_{q_0^{}}\leq\F_q$, is a set of $q_{0^{}}+1$ points in $\PG(1,q^n)$ that is projectively equivalent to the set $\{(x,y)_{\F_{q^n}}\mid(x,y)\in (\F_{q_0^{}})^2 \setminus (0,0)\}$.
\end{definition}
As $\PGL(2,q^n)$ acts $3$-transitively on the points of the projective line, we see that any $3$ points define a unique $\F_{q_0^{}}$-subline.

Consider the field reduction map $\mathcal{F}$ from $\PG(1,q^n)$ to $\PG(2n-1,q)$. When we apply $\mathcal{F}$ to an $\F_{q_0^{}}$-subline of $\PG(1,q^n)$, for a subfield $\F_{q_0^{}}\leq\F_q$, we obtain a set $\R_{q_0^{}}$ consisting of ${q_0^{}}+1$ $(n-1)$-spaces, such that if a line meeting $3$ elements $A,B,C$ of $\R_{q_0^{}}$, in the points $P_A,P_B,P_C$ respectively, then the unique $\F_{q_0^{}}$-subline containing $P_A,P_B,P_C$  meets all elements of $\R_{q_0^{}}$.
When $\F_{q_0^{}}=\F_q$, such a set $\R_q$ is called a {\em regulus}. It is well known that $3$ mutually disjoint $(n-1)$-spaces $A,B,C$ in $\PG(2n-1,q)$ lie on a unique regulus, which we will notate by $\R_q(A,B,C)$.
It now easily follows that $3$ mutually disjoint $(n-1)$-spaces $A,B$ and $C$ in $\PG(2n-1,q)$ lie on a unique $\R_{q_0^{}}$, which we will notate by $\R_{q_0^{}}(A,B,C)$. Note that $\R_{q_0^{}}(A,B,C) \subset \R_{q}(A,B,C)$.

\begin{definition}
An {\em $\F_{q_0^{}}$-subplane} in $\PG(2,q^n)$, for a given subfield $\F_{q_0^{}}\leq\F_q$, is a set of $q_{0^{}}^2+q_{0^{}}+1$ points and $q_{0^{}}^2+q_{0^{}}+1$ lines in $\PG(2, q^n)$ forming a projective plane, where the point set is projectively equivalent to the set $\{(x_0, x_1, x_2)_{\F_{q^n}} \mid (x_0, x_1, x_2) \in (\F_{q_0^{}})^3 \setminus (0, 0, 0)\}$.
\end{definition}
As $\PGL(3, q^n)$ acts sharply transitively on the frames of $\PG(2, q^n)$, we see that 4 points in general position define a unique $\F_{q_0^{}}$-subplane of $\PG(2, q^n)$.

Consider the field reduction map $\mathcal{F}$ from $\PG(2,q^n)$ to $\PG(3n-1,q)$. If we apply $\mathcal{F}$ to the point set of an $\F_{q_0^{}}$-subplane, we find a set $\mathcal{V}_{q_{0^{}}}$ of $q_{0^{}}^2+q_{0^{}}+1$ elements of a Desarguesian spread $\D$.
When $\F_{q_0^{}}=\F_q$, the set $\mathcal{V}_{q}$ consists of one system of a Segre variety $\mathbf{S}_{n-1,2}$ (for more information see e.g. \cite{MichelGeertruiFieldReduction}).
By \cite[Proposition 2.1, Corollary 2.3, Proposition 2.4]{LavrauwZanella}, we know that four $(n -1)$-spaces $A,B,C,D$ in $\PG(3n -1, q)$ in general position are contained in a unique Segre variety $\mathcal{V}_{q}$, which we will notate by $\mathcal{V}_{q}(A,B,C,D)$.
As a corollary, we obtain that for any subfield $\F_{q_0^{}}\leq\F_q$, four $(n-1)$-spaces $A,B,C,D$ in $\PG(3n -1, q)$ in general position are contained in a unique $\mathcal{V}_{q_{0^{}}}$, which we will notate by $\mathcal{V}_{q_{0^{}}}(A,B,C,D)$. Note that for any three $(n-1)$-spaces $E_1,E_2,E_3$ in $\mathcal{V}_{q_{0^{}}}(A,B,C,D)$, contained in the same $(2n-1)$-space, we have that $\R_{q_0}(E_1,E_2,E_3) \subset \mathcal{V}_{q_{0^{}}}(A,B,C,D)$.

\begin{definition}\label{closure}
If a point set $S$ contains a frame of $\PG(2,q)$, then its {\em closure} $\overline{S}$ consists of the points of the
smallest $2$-dimensional subgeometry of $\PG(2,q)$ containing all points of $S$.
\end{definition}
The closure $\overline{S}$ of a point set $S$ can be constructed recursively as follows:
\begin{itemize}
\item[(i)] determine the set $\A$ of all lines of $\PG(2,q)$ spanned by two points of $S$;
\item[(ii)] determine the set $\overline{S}$ of points that occur as the exact intersection of
two lines in $\A$, if $\overline{S} \neq S$ replace $S$ by
$\overline{S}$ and go to~(i), otherwise stop.
\end{itemize}

This recursive construction coincides with the definition of the closure of a set of points in a plane containing a quadrangle, given in \cite[Chapter XI]{Hughes}.

Consider a set $S$ of points of $\PG(2,q)$ containing a specific subset $\{P_i\}_{i=1,\ldots,m}$.
We define the {\em restricted closure} $\widetilde{S}$ {\em with respect to the points} $\{P_i\}$ to be the point set constructed recursively as follows:
\begin{itemize}
\item[(i)] determine the set $\A$ of all lines of $\PG(2,q)$ of the form $\langle P_i, Q \rangle$, $i=1,\ldots,m$, $Q \in S$;
\item[(ii)] determine the set $\widetilde{S}$ of points that occur as the exact intersection of
two lines in $\A$, if $\widetilde{S} \neq S$ replace $S$ by
$\widetilde{S}$ and go to~(i), otherwise stop.
\end{itemize}
Clearly the restricted closure $\widetilde{S}$ of $S$, with respect to all its points, is exactly its closure $\overline{S}$.

\begin{lemma}\label{subplane} Consider the point set $S$ of $\PG(2,q)$ containing a frame $\{P_1,P_2,Q_1,Q_2\}$ and the point $P_3 =P_1P_2 \cap Q_1Q_2$. The points of the restricted closure $\widetilde{S}$ of $S$ with respect to $\{P_1,P_2,P_3\}$, not on the line $P_1P_2$, are the points of the affine $\F_p$-subplane $\overline{S}\,\backslash P_1P_2$.
\end{lemma}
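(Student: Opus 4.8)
The plan is to transfer the whole question into the affine plane $\AG(2,q):=\PG(2,q)\setminus\ell$, where $\ell:=P_1P_2$ (note $P_3\in\ell$). A line of $\PG(2,q)$ through one of $P_1,P_2,P_3$ and different from $\ell$ meets $\ell$ only in that point, hence meets $\AG(2,q)$ in an affine line of a fixed ``direction'' $P_i$, and two such lines through distinct $P_i,P_j$ meet in exactly one affine point. So, in affine terms, forming the restricted closure of $S$ with respect to $\{P_1,P_2,P_3\}$ is the same as: start from the affine points of $S$ --- these include $Q_1,Q_2$, since $Q_1Q_2\cap\ell=P_3\notin\{Q_1,Q_2\}$ by general position --- and repeatedly adjoin all intersection points of affine lines of directions $P_1,P_2,P_3$ through points already obtained. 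Write $A:=\widetilde S\setminus\ell$ for the resulting set; the lemma is the equality $A=\overline S\setminus\ell$. Since the restricted closure is monotone in $S$, it is enough to treat $S=\{P_1,P_2,Q_1,Q_2,P_3\}$, the case in which $\overline S$ is the closure of the quadrangle $\{P_1,P_2,Q_1,Q_2\}$.

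For $A\subseteq\overline S\setminus\ell$: the closure $\overline S$ is a subplane with $P_1,P_2\in\overline S$ and $P_3=\ell\cap Q_1Q_2\in\overline S$, so every line drawn in the construction is a line of $\overline S$ and every new point is a point of $\overline S$; induction on the stages gives $\widetilde S\subseteq\overline S$. Moreover the closure of a quadrangle in a Desarguesian plane is the unique $\PG(2,p)$-subplane through it \cite{Hughes}, so $\overline S\setminus\ell$ is an affine plane $\AG(2,p)$.

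The real content is the reverse inclusion $\overline S\setminus\ell\subseteq A$. Put affine coordinates on $\AG(2,q)$ with $Q_1=(0,0)$ and $Q_2=(1,1)$, with $P_1,P_2$ the directions of the pencils $\{x=a\}_a$, $\{y=b\}_b$ and $P_3$ the direction of $\{y-x=c\}_c$ (this is forced: $P_3$ is the direction of $Q_1Q_2$); then $\overline S\setminus\ell=\F_p^2$. Intersecting the lines $x=a$, $y=b$, $y-x=c$ pairwise produces the points $(a,b)$, $(a,a+c)$, $(b-c,b)$. Hence, setting $U:=\{a:(a,b)\in A\text{ for some }b\}$, $V:=\{b:(a,b)\in A\text{ for some }a\}$ and $W:=\{b-a:(a,b)\in A\}$, closedness of $A$ forces all three families $\{(a,b):a\in U,b\in V\}$, $\{(a,a+c):a\in U,c\in W\}$, $\{(b-c,b):b\in V,c\in W\}$ into $A$, and reading off first, second and difference coordinates gives $U+W\subseteq V$, $V-W\subseteq U$, $V-U\subseteq W$. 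Since $0,1\in U\cap V$ and $0\in W$, one gets $\{-1,0,1\}\subseteq W$, then $\{-1,0,1,2\}\subseteq U\cap V$, and then a range-doubling induction --- $\{-k,\dots,k\}\subseteq U\cap V\cap W$ implies $\{-2k,\dots,2k\}\subseteq U\cap V\cap W$ (apply $V-U$, then $V-W$ and $U+W$) --- yields $U=V=W=\F_p$. Therefore $\F_p^2=\{(a,b):a\in U,b\in V\}\subseteq A$, which is what we wanted.

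I expect the only genuine difficulty to be this last point: that with only three available directions the coordinate sets $U,V,W$ cannot stall at a proper subset of $\F_p$, i.e. that three directions already sweep out the entire affine prime subplane. The affine reformulation, the easy inclusion, and the identification of $\overline S\setminus\ell$ with $\F_p^2$ are all routine. One small technical point to state explicitly is that ``$a\in U$'' must provide the line $x=a$ at some finite stage of the construction (similarly for $V$ and $W$), so that the displayed inclusions hold for the genuine finite closure rather than only in a limit; this is immediate from monotonicity of the construction stages.
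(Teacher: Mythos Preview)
Your proof is correct. Both you and the paper reduce the question to an affine coordinate computation showing that the three directions $P_1,P_2,P_3$ suffice to sweep out all of $\F_p^2$ starting from $Q_1,Q_2$; the easy inclusion $\widetilde S\subseteq\overline S$ and the identification of $\overline S\setminus\ell$ with $\AG(2,p)$ are handled the same way in both.

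The difference lies in how the hard inclusion is obtained. The paper works with explicit projective coordinates $P_1=(1,0,0)$, $P_2=(0,0,1)$, $P_3=(1,0,1)$, $Q_1=(0,1,0)$, $Q_2=(1,1,1)$ and builds the points $U_a=(0,1,a)$ and $T_a=(1,1,a)$ one step at a time via the ``$+1$'' recursion $T_{a+1}=P_3U_a\cap P_2Q_2$, $U_{a+1}=P_1T_{a+1}\cap P_2Q_1$; once all $U_a,T_a$ with $a\in\F_p$ are in hand, every remaining affine point of the $\F_p$-subplane is recovered as $P_1T_a\cap P_2U_b$. Your argument is more structural: you pass to the projection sets $U,V,W$ and observe that closedness of $A$ forces the arithmetic constraints $V-U\subseteq W$, $U+W\subseteq V$, $V-W\subseteq U$, from which a range-doubling induction gives $U=V=W=\F_p$. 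The paper's version is more concrete and exhibits the actual sequence of intersections; yours is more symmetric and makes transparent why exactly the prime subfield, and nothing larger, is generated.

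One remark: your closing technical caveat about finite stages is unnecessary. Since $U,V,W$ are read off the \emph{final} closed set $A$, the three inclusions follow directly from closedness of $A$ under a single application of the restricted-closure step; no appeal to intermediate stages is needed.
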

\begin{proof}
Clearly, the set $\widetilde{S}$ can contain no other points than the points of the $\F_p$-subplane $\overline{S}$.

Without loss of generality, we may choose coordinates such that $P_1=(1,0,0)_{\F_q}$, $P_2=(0,0,1)_{\F_q}$, $Q_1=(0,1,0)_{\F_q}$, $Q_2=(1,1,1)_{\F_q}$ and hence $P_3=(1,0,1)_{\F_q}$.

The point $U_1=(0,1,1)_{\F_q} =P_1 Q_2 \cap  P_2Q_1$ is contained in $\widetilde{S}$.
Hence, the point $T_2=(1,1,2)_{\F_q}=P_3 U_1 \cap P_2Q_2 $ is contained in $\widetilde{S}$.
Using $T_2\in\widetilde{S}$, we see that the point $U_2=(0,1,2)_{\F_q}=P_1 T_2 \cap P_2Q_1$ is also contained in $\widetilde{S}$.

Continuing this process, we get that all points $U_a=(0,1,a)_{\F_q}\in P_2Q_1$ and $T_a=(1,1,a)_{\F_q}\in P_2 Q_1$, with $a\in \F_p$, are contained in $\widetilde{S}$. All other points of the unique $\F_p$-subplane $\overline{S}$ through $P_1, P_2, Q_1,Q_2$ and not on $P_1P_2$, can be written as the intersection point $\langle P_1, T_a\rangle\cap\langle P_2, U_b\rangle$, for some $a$ and $b$ in $\F_p$.
Hence, they are all contained in $\widetilde{S}$.
\end{proof}

We can translate the previous lemma to the higher dimensional case in the following way.

\begin{lemma}\label{subplanefieldreduction}
Consider an $(n-1)$-spread $\S$ in $\PG(3n-1,q)$, $q=p^h$, having 3 normal elements $S_1, S_2, S_3$ contained in the same $(2n-1)$-space $\Pi_0$. If two spread elements $R_1,R_2 \in \S$ satisfy $\langle R_1,R_2 \rangle\cap\langle S_1,S_2\rangle=S_3$, then all $(n-1)$-spaces of $\mathcal{V}_p(S_1,S_2,R_1,R_2) \setminus \langle S_1,S_2\rangle$ are contained in $\S$.
\end{lemma}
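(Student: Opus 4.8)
\emph{Proof plan.} The plan is to realise $\mathcal{V}_p := \mathcal{V}_p(S_1,S_2,R_1,R_2)$ as the image of an $\F_p$-subplane of $\PG(2,q^n)$ under a field reduction map into a suitable Desarguesian spread, and then to transport the explicit planar construction from the proof of Lemma~\ref{subplane} step by step into $\PG(3n-1,q)$, using the normality of $S_1,S_2,S_3$ to keep everything inside $\S$.

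First I would check general position. Put $\Pi_0=\langle S_1,S_2\rangle$. Any point of $R_1\cap\Pi_0$ lies in $\langle R_1,R_2\rangle\cap\Pi_0=S_3$, hence in $R_1\cap S_3=\emptyset$ (distinct spread elements), so $R_1\cap\Pi_0=\emptyset$, and likewise $R_2\cap\Pi_0=\emptyset$; also $S_i\cap\langle R_1,R_2\rangle\subseteq\Pi_0\cap\langle R_1,R_2\rangle=S_3$ forces $S_i\cap\langle R_1,R_2\rangle=\emptyset$ for $i=1,2$. Thus each of the triples $\{S_1,S_2,R_1\}$, $\{S_1,S_2,R_2\}$, $\{S_1,R_1,R_2\}$, $\{S_2,R_1,R_2\}$ spans $\PG(3n-1,q)$, i.e.\ $S_1,S_2,R_1,R_2$ lie in general position. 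By the discussion of coordinates in Section~\ref{PrelimQuasi}, there is then a field reduction map $\mathcal{F}\colon\PG(2,q^n)\to\PG(3n-1,q)$ whose Desarguesian spread $\D$ contains $S_1,S_2,R_1,R_2$; let $P_1,P_2,Q_1,Q_2$ be the points of $\PG(2,q^n)$ with $\mathcal{F}(P_1)=S_1$, $\mathcal{F}(P_2)=S_2$, $\mathcal{F}(Q_1)=R_1$, $\mathcal{F}(Q_2)=R_2$, which form a frame. Writing $\sigma=\overline{\{P_1,P_2,Q_1,Q_2\}}$ for the $\F_p$-subplane they generate, $\mathcal{F}$ maps $\sigma$ onto a set of the form $\mathcal{V}_p$ containing $S_1,S_2,R_1,R_2$, hence onto $\mathcal{V}_p(S_1,S_2,R_1,R_2)$ by uniqueness; moreover $\mathcal{F}$ maps the line $P_1P_2$ onto $\Pi_0$ and the point $P_3:=P_1P_2\cap Q_1Q_2$ onto $\langle S_1,S_2\rangle\cap\langle R_1,R_2\rangle=S_3$. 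So inside $\sigma$ we are exactly in the situation of Lemma~\ref{subplane}.

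The engine is the following \emph{transfer principle}: if $F,G\in\S$ are elements of $\D$ and $i,j\in\{1,2,3\}$ are such that the $(2n-1)$-spaces $\langle S_i,F\rangle$ and $\langle S_j,G\rangle$ are distinct, then $H:=\langle S_i,F\rangle\cap\langle S_j,G\rangle$ is an $(n-1)$-space lying in both $\D$ and $\S$. Indeed, since $S_i,F$ (resp.\ $S_j,G$) are distinct elements of the Desarguesian spread $\D$, the space $\langle S_i,F\rangle$ (resp.\ $\langle S_j,G\rangle$) is the $\mathcal{F}$-image of the line of $\PG(2,q^n)$ through the two corresponding points, so $H$ is the $\mathcal{F}$-image of the intersection of two distinct lines, hence an $(n-1)$-space of $\D$. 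On the other hand, $S_i$ is a normal element of $\S$ and $F\in\S$, so $\langle S_i,F\rangle$ is partitioned by elements of $\S$, and similarly $\langle S_j,G\rangle$; if $x$ is a point of $H$ and $E\in\S$ is the spread element through $x$, then $E\subseteq\langle S_i,F\rangle$ and $E\subseteq\langle S_j,G\rangle$, so $E\subseteq H$, and comparing dimensions gives $E=H\in\S$.

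Finally I would run the explicit recursion in the proof of Lemma~\ref{subplane}: it produces every point of $\sigma\setminus P_1P_2$ out of the frame $\{P_1,P_2,Q_1,Q_2\}$ by finitely many steps, each of the form ``intersect the line joining $P_i$ to an already-obtained point with the line joining $P_j$ to an already-obtained point'', with $i,j\in\{1,2,3\}$, the two joining lines distinct, and the two joined points distinct from $P_1,P_2,P_3$. Starting from $S_1,S_2,R_1,R_2,S_3\in\S\cap\D$ and invoking the transfer principle once for each step, an induction shows that the element of $\D$ corresponding to every point of $\sigma\setminus P_1P_2$ belongs to $\S$, i.e.\ every element of $\mathcal{V}_p(S_1,S_2,R_1,R_2)$ not contained in $\langle S_1,S_2\rangle$ belongs to $\S$. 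I expect the only delicate point to be the transfer principle together with the bookkeeping that, at each stage, the two $(2n-1)$-spaces being intersected are genuinely distinct, so that $H$ has dimension $n-1$ rather than more; this distinctness is inherited from the planar picture, since in the proof of Lemma~\ref{subplane} the joining lines are pairwise distinct at every step and distinct lines of $\PG(2,q^n)$ have distinct $\mathcal{F}$-images.
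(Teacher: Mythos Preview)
Your proposal is correct and follows essentially the same route as the paper: pass to a Desarguesian spread $\D$ via a field reduction map so that $S_1,S_2,R_1,R_2$ (and hence automatically $S_3=\langle S_1,S_2\rangle\cap\langle R_1,R_2\rangle$) lie in $\D$, and then use the normality of $S_1,S_2,S_3$ to push the restricted-closure construction of Lemma~\ref{subplane} from $\PG(2,q^n)$ into $\S$. The paper compresses your ``transfer principle'' and the step-by-step recursion into the single sentence ``since $S_1,S_2,S_3$ are normal elements for $\S$, the field reduction of the points of the restricted closure of $\{P_1,P_2,P_3,Q_1,Q_2\}$ with respect to $\{P_1,P_2,P_3\}$ must all be contained in $\S$''; your explicit general-position check and the careful verification that each intersection has the right dimension are exactly what is implicitly being used there.
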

\begin{proof}
There exists a field reduction map $\mathcal{F}$ from $\PG(2,q^n)$ to $\PG(3n-1,q)$, such that $S_1$, $S_2$, $S_3$, $R_1$, $R_2$ are contained in the Desarguesian spread $\D$ of $\PG(3n-1,q)$ defined by $\mathcal{F}$. This means, there exist points $P_1$, $P_2$, $P_3$, $Q_1$, $Q_2$ such that their field reduction is equal to $S_1$, $S_2$, $S_3$, $R_1$, $R_2$ respectively.

Since $S_1,S_2,S_3$ are normal elements for $\S$, the field reduction of the points of the restricted closure of $\{P_1,P_2,P_3,Q_1,Q_2\}$ with respect to $\{P_1,P_2,P_3\}$ must all be contained in $\S$. By  Lemma \ref{subplane}, the restricted closure contains all points of the $\F_p$-subplane defined by $\{P_1,P_2,Q_1,Q_2\}$, not on $\langle P_1, P_2\rangle$.
Hence, its field reduction, i.e. the $(n-1)$-spaces of $\mathcal{V}_p(S_1,S_2,R_1,R_2) \setminus \langle S_1, S_2\rangle$ are all contained in $\S$.
\end{proof}

\subsection{Characterising spreads with 3 normal elements in the same $(2n-1)$-space}

%Recall that a spread $\S$ in $\PG(2n-1,q)$ is {\em regular} if and only if for any $3$ elements $A,B,C\in\S$, the elements of $\R_q(A,B,C)$ are all contained in $\S$.
%It is well-known that every Desarguesian spread $\D$ is regular. Conversely, every regular $(n-1)$-spread  of $\PG(2n-1,q)$, $q>2$, is Desarguesian (see \cite{BrB1966}). Note that when $q=2$, every $(n-1)$-spread  of $\PG(2n-1,2)$ is regular. More generally, for every three elements $A,B,C$ of an $(n-1)$-spread $\S$ of $\PG(2n-1,2^h)$, all elements of $\R_2(A,B,C)=\{A,B,C\}$ are contained in $\S$.

There exists a different but equivalent definition of a semifield spread than the one given in Subsection \ref{quasifieldsection}. Namely, an $(n-1)$-spread $\S$ of $\PG(2n-1,q)$ is a semifield spread if and only if it contains a special element $E$ such that the stabiliser of $\S$ fixes $E$ pointwise and acts transitively on the elements of $\S \setminus\{E\}$ (see \cite[Corollary 5.60]{HandbookTranslationPlanes}). The element $E$ is called the {\em shears} element \index{shears element}%\index{semifield spread!shears element}
of $\S$.

Consider a matrix spread set $\Mb$ containing $0$ and closed under addition, and consider the corresponding semifield spread $\S(\Mb)= \{(I,A)\mid A \in \mathbf{M}\} \cup \{E=(0,I)\}$ of $\PG(2n-1,q)$. In this case, the element $E=(0,I)$ is the shears element of $\S(\Mb)$.

%Consider the semifield spread $\S(\Mb)= \{(I,A)\mid A \in \mathbf{M}\} \cup \{E=(0,I)\}$ of $\PG(2n-1,q)$ with matrix spread set $\Mb$ closed under addition. The element $E$ is called the {\em shears} element of $\S(\Mb)$. Moreover, parameters of the semifield $\mathcal{Q}_e(\Mb)$ can be translated to subsets of the associated spread set $\Mb$, see \cite[Theorem 2.2]{MarinoPolverino}. The {\it right nucleus} of $\Mb$  is the set $$\NN_r(\mathbf{M}) = \{X \in \Mb \mid X\mathbf{M} = \mathbf{M}\}.$$
%The {\em centre} $Z(\mathbf{M})$ is defined as $Z(\mathbf{M}) = \{X \in \NN_r(\Mb) \mid  XY=YX, ~\forall Y \in \Mb\}$. If $\Mb$ contains the identity $I$ and $\F_{q_0}\leq \Fq$ is the largest subfield such that $\Mb$ is a subspace over $\F_{q_0}$, then $\{\lambda I \mid \lambda \in \F_{q_0}\}\subseteq Z(\Mb)$.

\begin{definition}
The subsets of a semifield $\mathcal{Q}$ given as
\begin{align*}
\NN_r(\mathcal{Q}) &= \{x \in \mathcal{Q} \mid \forall a, b \in  {Q}: (a * b) * x = a * (b * x)\}\\
\NN_m(\mathcal{Q}) &= \{x \in \mathcal{Q} \mid\forall a, c \in {Q}: (a * x) * c = a * (x * c)\},\\
%\NN_l(\mathcal{Q}) &= \{x \in \mathcal{Q} \mid \forall b, c \in \mathcal{Q}: (x * b) * c = x * (b * c)\},\\
Z(\mathcal{Q}) &= \{x \in \NN_r(\mathcal{Q}) \cap \NN_m(\mathcal{Q}) \mid \forall a \in \mathcal{Q} : x * a = a * x \}
\end{align*}
are all fields and are called, respectively, the {\em right nucleus}, {\em middle nucleus} %, {\em left nucleus}
and {\em centre} of the semifield.
\end{definition}\index{right nucleus}\index{middle nucleus} \index{centre} %\index{left nucleus}

% and are known, respectively,
%A finite semifield is a vector space over its nuclei and its center.

The parameters of the semifield $\mathcal{Q}_e(\Mb)$ can be translated to subsets of the associated spread set $\Mb$, that is
$\mathcal{N}_r(\mathbf{M})=\{ M_x \in \mathbf{M} \mid x \in \NN_r(\mathcal{Q}_e(\mathbf{M}))\}$, $\mathcal{N}_m(\mathbf{M})=\{ M_x \in \mathbf{M}\mid x \in \NN_m(\mathcal{Q}_e(\mathbf{M}))\}$ and $\mathcal{Z}(\mathbf{M})=\{ M_x \in \mathbf{M} \mid x \in Z(\mathcal{Q}_e(\mathbf{M}))\}$.
As in \cite[Theorem 2.1]{EbertMarinoPolverino}, but considering our conventions, we obtain that
\begin{align*}
\NN_r(\mathcal{Q}_e(\Mb)) &= \{x \in \mathcal{Q}_e(\Mb) \mid \forall a, b \in  \mathcal{Q}_e(\Mb): (a * b) * x = a * (b * x)\}\\
&=\{x \in \mathcal{Q}_e(\Mb) \mid \forall a, b \in  \mathcal{Q}_e(\Mb): a M_b M_x = a M_{b * x} \}\\
&=\{x \in \mathcal{Q}_e(\Mb) \mid \forall  b \in  \mathcal{Q}_e(\Mb): M_b M_x = M_{b * x} \}\\
&=\{x \in \mathcal{Q}_e(\Mb) \mid \forall b \in  \mathcal{Q}_e(\Mb): M_b M_x \in \Mb \}\\
&=\{x \in \mathcal{Q}_e(\Mb) \mid \forall M \in \Mb: M M_x \in \Mb \}.
\end{align*}
Hence, it follows that
$$
\mathcal{N}_r(\mathbf{M}) =  \{X \in \Mb \mid \mathbf{M}X = \mathbf{M} \mbox{ or } X=0\}.
$$
Similarly, we obtain \begin{align*}
\mathcal{N}_m(\mathbf{M}) = \,& \{X \in \Mb \mid X\mathbf{M} = \mathbf{M} \mbox{ or  } X=0\},  \mbox{ and }\\
\mathcal{Z}(\mathbf{M}) = \, & \{X \in \mathcal{N}_r(\Mb)\cap\mathcal{N}_m(\Mb) \mid\forall Y \in \Mb: YX=XY \}.
\end{align*}
%!!!!!!!! actually we obtain MX \subset M, but since for every a,b there exist a unique x such that a*x=b

Suppose $\Mb$ contains the identity $I$, then $\Mb$ is a subspace over a subfield $\F_{q_0}\leq \Fq$  if and only if $\{\lambda I \mid \lambda \in \F_{q_0}\}\subseteq \mathcal{Z}(\Mb)$. %\index{centre of a semifield spread}

Recall that a spread $\S$ in $\PG(2n-1,q)$ is {\em regular} if and only if for any $3$ elements $E_1,E_2,E_3\in\S$, the elements of $\R_q(E_1,E_2,E_3)$ are all contained in $\S$.
It is well known that every $(n-1)$-spread of $\PG(2n-1,q)$, $q>2$, is regular if and only if it is Desarguesian (see \cite{BrB1966}). When $q=2$, every $(n-1)$-spread  of $\PG(2n-1,2)$ is regular. More generally, for every three elements $E_1,E_2,E_3$ of an $(n-1)$-spread $\S$ of $\PG(2n-1,2^h)$, all elements of $\R_2(E_1,E_2,E_3)=\{E_1,E_2,E_3\}$ are contained in $\S$.

Loosening the concept of regularity of Desarguesian spreads, we obtain the following result for semifield spreads. For $\F_{q_0}=\F_q$, this result was already obtained in \cite[Teorema 5]{Lunardon Proposizioni}.

%Loosening the concept of regularity of Desarguesian spreads, we obtain the following result for semifield spreads.

\begin{theorem}\label{IsSemifieldSpread}
Suppose $\S$ is an $(n-1)$-spread of $\PG(2n-1,q)$. Consider a subfield $\F_{q_0^{}}\leq\F_q$, $q_0>2$ and an element $E \in\S$.
The spread $\S$ is a semifield spread with $\F_{q_0}$ contained in its center and having $E$ as its shears element if and only if for all $E_1,E_2\in\S$, we have $\R_{q_0^{}}(E,E_1,E_2)\subset\S$.
\end{theorem}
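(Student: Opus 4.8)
The plan is to pass to matrix spread sets and reduce the equivalence to a linear-algebra statement about $\Mb$. By \cite[Section 5]{Br}, and after applying a collineation moving the distinguished element $E$ to the point $E_\infty=(0,I)$, we may assume that $\S=\S(\Mb)=\{E_A=(I,A)\mid A\in\Mb\}\cup\{E_\infty\}$ for a matrix spread set $\Mb$ with $0,I\in\Mb$; both the hypothesis and the conclusion of the theorem are preserved by such a normalisation. Throughout we read the hypothesis as: for all distinct $E_1,E_2\in\S\setminus\{E\}$ (the only cases in which $\R_{q_0}(E,E_1,E_2)$ is defined), the regulus $\R_{q_0}(E,E_1,E_2)$ lies in $\S$.

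The key computation is the explicit form of such a regulus. I claim that for $A\neq B$ in $\Mb$,
\[
\R_{q_0}(E_\infty,E_A,E_B)=\{\,E_{A+\lambda(B-A)}\mid\lambda\in\F_{q_0}\,\}\cup\{E_\infty\}.
\]
To prove this, note that the $\F_q$-linear map $\phi:(x,y)\mapsto(x,(y-xA)(B-A)^{-1})$ is a collineation of $\PG(2n-1,q)$ fixing $E_\infty$ and carrying $E_A,E_B$ to $E_0,E_I$ respectively; hence it carries $\R_{q_0}(E_\infty,E_A,E_B)$ to $\R_{q_0}(E_\infty,E_0,E_I)$, so it suffices to identify the latter and pull it back by $\phi^{-1}:(x,z)\mapsto(x,z(B-A)+xA)$. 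In the Desarguesian spread of $\PG(2n-1,q)$ obtained by field reduction from $\PG(1,q^n)$, the elements $E_\infty,E_0,E_I$ are the images of the points $(0,1),(1,0),(1,1)$, whose unique $\F_{q_0}$-subline is $\{(1,\lambda)\mid\lambda\in\F_{q_0}\}\cup\{(0,1)\}$; its field reduction is $\{E_{\lambda I}\mid\lambda\in\F_{q_0}\}\cup\{E_\infty\}$ (meaningful since $\F_{q_0}\leq\F_q$, so $\lambda I$ is an $\F_q$-matrix), and applying $\phi^{-1}$ term by term yields the claim.

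It follows that the hypothesis holds if and only if $A+\lambda(B-A)\in\Mb$ for all $A,B\in\Mb$ and all $\lambda\in\F_{q_0}$, i.e.\ $\Mb$ is closed under binary affine $\F_{q_0}$-combinations. This is the only place the assumption $q_0>2$ is used: over a field with more than two elements a nonempty set closed under binary affine combinations is an affine subspace --- after translating one of its points to $0$ the set is closed under $\F_{q_0}$-scalars, and for $a,b$ in it and any $\lambda\notin\{0,1\}$ one has $a+b=(1-\lambda)(\tfrac{1}{1-\lambda}a)+\lambda(\tfrac{1}{\lambda}b)$, so it is also closed under addition --- and since $0\in\Mb$ this says exactly that $\Mb$ is an $\F_{q_0}$-subspace of the algebra of $n\times n$ matrices over $\F_q$.

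Finally we connect this with the semifield statement. An $\F_{q_0}$-subspace is closed under addition, so if $\Mb$ is one then $\S(\Mb)$ is a semifield spread with shears element $E_\infty=E$ by Theorem \ref{SemiNear}; and, since $I\in\Mb$, being an $\F_{q_0}$-subspace is equivalent to $\{\lambda I\mid\lambda\in\F_{q_0}\}\subseteq\mathcal{Z}(\Mb)$ by the characterisation recalled before the theorem, which is precisely the statement that $\F_{q_0}$ lies in the centre of the associated semifield. Conversely, if $\S$ is a semifield spread with shears element $E$ and $\F_{q_0}$ in its centre, then in the above coordinates $\Mb$ is closed under addition and $\{\lambda I\}\subseteq\mathcal{Z}(\Mb)$, hence $\Mb$ is an $\F_{q_0}$-subspace and the regulus condition holds. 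I expect the only genuinely delicate point to be the explicit description of $\R_{q_0}(E_\infty,E_A,E_B)$ --- in particular justifying the reduction to the triple $(E_\infty,E_0,E_I)$ via $\phi$ and performing the subline-to-regulus passage inside the standard Desarguesian spread with consistent conventions; everything else is a routine translation through the spread-set dictionary.
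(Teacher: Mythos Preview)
Your argument is correct and follows essentially the same route as the paper: normalise so that $E=E_\infty$ and $\S=\S(\Mb)$, identify $\R_{q_0}(E_\infty,E_{A_1},E_{A_2})$ with the spaces $(I,(1-\lambda)A_1+\lambda A_2)$ for $\lambda\in\F_{q_0}$ together with $E_\infty$, and then observe that closure under these affine $\F_{q_0}$-combinations (using $0\in\Mb$ and $q_0>2$) is equivalent to $\Mb$ being an $\F_{q_0}$-subspace. Your write-up is in fact more explicit than the paper's in two places: you justify the description of $\R_{q_0}(E_\infty,E_A,E_B)$ via the collineation $\phi$ and the field-reduction picture, whereas the paper simply asserts it; and you spell out both the role of $q_0>2$ in passing from affine closure to linearity and the converse direction, which the paper leaves implicit.
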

\begin{proof}
We may assume without loss of generality that $\S =\S(\Mb)= \{(I,A)\mid A \in \mathbf{M}\} \cup \{E=(0,I)\}$ for some spread set $\mathbf{M}$ containing $0$ and $I$. Let $E_i = (I,A_i)$. Then the set $\R_{q_0}(E,E_1,E_2)$ consists of the spaces $(I,(1-\lambda)A_1+\lambda A_2)$ for $\lambda\in \F_{q_0}$, together with $E$. Hence $(1-\lambda)A_1+\lambda A_2\in \Mb$, for all $A_1,A_2\in \Mb$ and all $\lambda \in \F_{q_0}$.

Since $0\in\Mb$, for all $A_2\in \Mb$ we get that $\lambda A_2\in \Mb$ for all $\lambda \in \F_{q_0}$.
Therefore, for all $A_1,A_2\in \Mb$,
$\mu A_1+\lambda A_2\in \Mb$ for all $\lambda,\mu \in \F_{q_0}$, and so $\Mb$ is an $\F_{q_0}$-subspace, implying that $\Mb$ is a semifield spread set with centre containing $\F_{q_0}$. It follows that $\S$ is a semifield spread with shears element $E$.
\end{proof}

%For $\F_{q_0}=\F_q$, this result was already obtained in \cite{Lunardon Proposizioni}.
\begin{corollary} {\rm \cite[Teorema 5]{Lunardon Proposizioni}}
Suppose $\S$ is an $(n-1)$-spread of $\PG(2n-1,q)$, $q>2$, and consider an element $E \in\S$.
For all $E_1,E_2\in\S$, we have $\R_{q}(E,E_1,E_2)\subset\S$, if and only if $\S$ is a semifield spread with $\F_q$ contained in the center and having $E$ as its shears element.
\end{corollary}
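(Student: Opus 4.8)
The plan is to obtain this immediately from Theorem \ref{IsSemifieldSpread} by specialising the subfield to $\F_{q_0} = \F_q$. The hypothesis $q > 2$ is precisely the condition $q_0 > 2$ required there, so Theorem \ref{IsSemifieldSpread} applies with $E_1, E_2$ ranging over all of $\S$, and its statement then reads verbatim as the claimed equivalence: $\R_q(E, E_1, E_2) \subset \S$ for all $E_1, E_2 \in \S$ if and only if $\S$ is a semifield spread with $\F_q$ contained in its centre and having $E$ as its shears element. In other words, no genuinely new argument is needed; the corollary is the $q_0 = q$ instance of the theorem.

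For completeness one can also run the argument directly, mimicking the proof of Theorem \ref{IsSemifieldSpread}. Choose coordinates so that $\S = \S(\Mb) = \{(I,A) \mid A \in \Mb\} \cup \{E = (0,I)\}$ with $0, I \in \Mb$, and write $E_i = (I, A_i)$. Then $\R_q(E, E_1, E_2)$ consists of the spaces $(I, (1-\lambda)A_1 + \lambda A_2)$ for $\lambda \in \F_q$ together with $E$, so the hypothesis $\R_q(E, E_1, E_2) \subset \S$ for all $E_1, E_2 \in \S$ is equivalent to $(1-\lambda)A_1 + \lambda A_2 \in \Mb$ for all $A_1, A_2 \in \Mb$ and all $\lambda \in \F_q$. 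Setting $A_1 = 0$ gives $\lambda A_2 \in \Mb$, and combining this with the previous relation yields $\mu A_1 + \lambda A_2 \in \Mb$ for all $\lambda, \mu \in \F_q$; hence $\Mb$ is an $\F_q$-subspace. Therefore $\Mb$ is a semifield spread set whose centre contains $\F_q$, and $\S$ is a semifield spread with shears element $E$. The converse is immediate, since a semifield spread set with $\F_q$ in its centre is an $\F_q$-subspace of matrices and hence contains every $(1-\lambda)A_1 + \lambda A_2$.

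There is essentially no obstacle here, the statement being a corollary. The only point worth recording is the role of the hypothesis $q > 2$: when $q = 2$ the set $\R_2(E, E_1, E_2)$ equals $\{E, E_1, E_2\}$, so the condition holds vacuously for every $(n-1)$-spread of $\PG(2n-1,2^h)$ and does not force $\S$ to be a semifield spread. Thus the restriction $q > 2$ (equivalently $q_0 > 2$ in Theorem \ref{IsSemifieldSpread}) is genuinely necessary.
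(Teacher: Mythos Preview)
Your proposal is correct and matches the paper exactly: the corollary is stated immediately after Theorem \ref{IsSemifieldSpread} with no separate proof, precisely because it is the specialisation $\F_{q_0}=\F_q$. Your optional direct argument is also a verbatim rerun of that theorem's proof, and your remark on the necessity of $q>2$ agrees with the paper's earlier observation that $\R_2(E,E_1,E_2)=\{E,E_1,E_2\}$.
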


Now, consider three disjoint $(n-1)$-spaces $S_1= (I, 0,0), S_2= (I,I, 0), S_3=(0,I,0)$ of $\PG(3n-1,q)$ contained in the same $(2n-1)$-space $\Pi_0=\{(x,y,0)_{\F_q} \mid x,y \in \F_{q^n}\}$.
Consider two spread sets $\mathbf{M}$ and $\mathbf{M_0}$, both containing $0$ and $I$, and define the following $(n-1)$-spread
$$\T_3(\mathbf{M},\mathbf{M_0})= \left\{(A, B , I) \mid  A,B \in \mathbf{M}\right\}\cup \left\{(I, C , 0)\mid  C \in \mathbf{M_0}\right\}\cup\{(0,I,0)\}$$
in $\PG(3n-1,q)$.
If $\mathbf{M}$ is a semifield spread set, then one can check that $\T_3(\mathbf{M},\Mb_0)$ has at least $3$ normal elements, namely $S_1,S_2$ and $S_3$.

\begin{theorem}\label{SpreadBySemifield}
Consider an $(n-1)$-spread $\S$ in $\PG(3n-1,q)$, $q$ odd.
If $\S$ contains $3$ normal elements contained in the same $(2n-1)$-space $\Pi_0$, then $\S$ is equivalent to $\T_3(\mathbf{M},\mathbf{M_0})$, for some spread set $\mathbf{M_0}$ and a semifield spread set $\mathbf{M}$.

Furthermore, the set of normal elements of $\S$ contained in $\S\cap \Pi_0$ is equivalent to $\{(I,C,0)\mid C \in \Mb_0 \cap \mathcal{N}_r(\Mb)\}\cup\{(0,I,0)\}$.
%Furthermore, the set of normal elements of $\S$ contained in $\S\cap \Pi_0$ is projectively equivalent to $\{(I,C,0)\mid C \in \Mb_0 \cap \NN_r(\Mb)\}$.
\end{theorem}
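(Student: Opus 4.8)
The plan is to build up coordinates for $\S$ step by step, exploiting the three normal elements $S_1,S_2,S_3$ lying in the common $(2n-1)$-space $\Pi_0$. First I would normalise so that $S_1=(I,0,0)$, $S_2=(I,I,0)$, $S_3=(0,I,0)$, so $\Pi_0=\{(x,y,0)_{\F_q}\}$. Since $S_1$ is normal, $\S$ restricted to $\langle S_1,S_3\rangle$... more to the point, I would look at the quotient by $S_1$ (which is legitimate since $S_1$ is normal): the images of $S_2,S_3$ are two normal elements of the quotient spread in general position in $\PG(2n-1,q)$ — but two points never span, so instead the right move is to use the fact that $\langle S_2,S_3\rangle=\Pi_0$ is spread-covered, so $\S\cap\Pi_0$ is an $(n-1)$-spread of $\Pi_0$ containing three elements $S_1,S_2,S_3$. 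Now I would invoke Lemma \ref{subplanefieldreduction}: for any two spread elements $R_1,R_2\in\S$ with $\langle R_1,R_2\rangle\cap\langle S_1,S_2\rangle=S_3$ (equivalently $\langle R_1,R_2\rangle\cap\Pi_0=S_3$), the whole $\F_p$-subplane $\mathcal V_p(S_1,S_2,R_1,R_2)\setminus\langle S_1,S_2\rangle$ lies in $\S$. Crucially, the hypothesis $q$ odd means $p$ is odd, so $\F_p\neq\F_2$ and this subplane is nontrivial; this is where $q$ odd is used. Choosing $R_1=(0,0,I)$, $R_2=(I,I,I)$ (these meet $\Pi_0$ in $S_3$) and iterating, I would deduce that the set $\Mb:=\{A\mid (I,A,I)\in\S\}$ is closed under $\lambda\mapsto(1-\lambda)A_1+\lambda A_2$ for $\lambda\in\F_p$, and together with $0\in\Mb$ this forces $\Mb$ to be an $\F_p$-subspace of matrices. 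Since $\Mb$ is automatically a spread set (two elements of $\S$ through the generic point of $\langle S_1,\text{transversal}\rangle$), closure under addition makes $\Mb$ a semifield spread set by Theorem \ref{SemiNear}.

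Next I would extract the full coordinate description. Because $S_1,S_2,S_3$ are normal, every element of $\S$ not in $\Pi_0$ is recovered as $\langle S_1,U\rangle\cap\langle S_3,V\rangle$ (or via projections from the $S_i$) for suitable elements $U,V$, and a direct computation in the established coordinates shows each such element has the shape $(A,B,I)$ with $A,B\in\Mb$; conversely normality of $S_1$ and $S_3$ and the semifield closure of $\Mb$ show all of these occur. For the elements in $\Pi_0$ itself, $\S\cap\Pi_0$ is an $(n-1)$-spread of $\PG(2n-1,q)$ containing $S_1=(I,0)$, $S_2=(I,I)$, $S_3=(0,I)$ (in the $\Pi_0$-coordinates $(x,y)$), so writing it as $\{(I,C,0)\mid C\in\Mb_0\}\cup\{(0,I,0)\}$ for a spread set $\Mb_0$ containing $0$ and $I$ gives precisely the claimed form $\T_3(\Mb,\Mb_0)$. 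Verifying that the resulting configuration is genuinely a spread (partition of all points) is a routine check using that $\Mb,\Mb_0$ are spread sets.

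Finally, for the statement about which elements of $\S\cap\Pi_0$ are normal: an element $(I,C,0)$ with $C\in\Mb_0$ is normal for $\S$ iff $\S$ induces a spread in $\langle (I,C,0),F\rangle$ for every $F\in\S$; the only nontrivial case is $F=(A,B,I)$ with $A,B\in\Mb$. Translating this incidence condition into matrix language — the $(3n-1)$-space $\langle (I,C,0),(A,B,I)\rangle$ meets $\S$ in a spread iff the relevant family of matrices built from $C$ and $\Mb$ is again a spread set — I would show it is equivalent to $\Mb C=\Mb$ (using that $\Mb$ is closed under addition, so multiplication on the right by $C$ permutes $\Mb$ exactly when $C$ lies in the right nucleus), i.e. $C\in\mathcal N_r(\Mb)$; together with $C\in\Mb_0$ this gives the set $\{(I,C,0)\mid C\in\Mb_0\cap\mathcal N_r(\Mb)\}\cup\{(0,I,0)\}$ up to equivalence. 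The main obstacle I anticipate is this last step: carefully setting up the incidence/spread-set condition for normality of a $\Pi_0$-element and matching it precisely with the right-nucleus description $\mathcal N_r(\Mb)=\{X\in\Mb\mid\Mb X=\Mb\text{ or }X=0\}$ derived earlier — in particular handling the element $(0,I,0)$ and checking it is always normal, and making sure the equivalence (not just containment) of the two sets of normal elements holds.
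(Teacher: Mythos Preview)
Your strategy is the paper's: coordinatise $S_1,S_2,S_3\subset\Pi_0$, apply Lemma~\ref{subplanefieldreduction} to obtain an $\F_p$-regulus condition on a transversal spread through $S_3$ and hence (via the content of Theorem~\ref{IsSemifieldSpread}, using $p>2$) a semifield spread set $\Mb$, reconstruct $\S\setminus\Pi_0$ as intersections $\langle S_i,U\rangle\cap\langle S_j,V\rangle$, and finish with the right-nucleus computation for normality inside $\Pi_0$. However, several of your concrete coordinate choices do not work as written. With $R_1=(0,0,I)$ and $R_2=(I,I,I)$ one has $\langle R_1,R_2\rangle=\{(x,x,z)_{\F_q}\}$, which meets $\Pi_0$ in $S_2$, not $S_3$; so Lemma~\ref{subplanefieldreduction} only applies after permuting the roles of $S_2$ and $S_3$. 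More substantively, your definition $\Mb=\{A\mid(I,A,I)\in\S\}$ presupposes that the $(2n-1)$-space $\{(x,y,x)_{\F_q}\}$ is partitioned by $\S$, which you have not arranged; nothing so far forces any element of the shape $(I,A,I)$ to lie in $\S$. The paper sidesteps this cleanly by first using normality of $S_3$ to choose a $(2n-1)$-space $\Pi$ through $S_3$, meeting $\Pi_0$ only in $S_3$, that is partitioned by $\S$, and then fixing coordinates so that $\Pi=\{(0,y,z)_{\F_q}\}$; this makes $\Mb=\{A\mid(0,A,I)\in\S\}$ a spread set by construction, and any two $R_1,R_2\in(\S\cap\Pi)\setminus\{S_3\}$ automatically satisfy $\langle R_1,R_2\rangle\cap\Pi_0=S_3$.

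Your reconstruction step also needs a different pair of normal elements. With $U,V\in\S\cap\Pi$ and $S_3\in\Pi$, the span $\langle S_3,V\rangle$ is all of $\Pi$, so $\langle S_1,U\rangle\cap\langle S_3,V\rangle$ just returns $U$ and produces no new elements. The correct pair is $S_1,S_2$ (both lie in $\Pi_0$, neither in $\Pi$): one computes $\langle S_1,(0,B,I)\rangle\cap\langle S_2,(0,D,I)\rangle=(B-D,B,I)$, and then additive closure of $\Mb$ gives every $(A,B,I)$ with $A,B\in\Mb$. Your final paragraph on the normality of $(I,C,0)$ is correct and matches the paper's computation exactly (the condition reduces to $(D-A)C+B\in\Mb$ for all $A,B,D\in\Mb$, i.e.\ $\Mb C\subseteq\Mb$); note that $(0,I,0)=S_3$ is normal by hypothesis, so no separate verification is needed there.
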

\begin{proof}
%We denote the points of $\PG(3n-1,q)$ by $\{(x,y,z)_{\F_q} \mid x,y,z\in\F_{q^n}, (x,y,z)\neq(0,0,0)\}$.
Consider an $(n-1)$-spread $\S$ containing normal elements $S_1,S_2,S_3$ contained in the same $(2n-1)$-space $\Pi_0$.
Since $S_3$ is a normal element with respect to $\S$, we can consider an $(2n-1)$-space $\Pi$, meeting $\Pi_0$ in the space $S_3$, such that $\S\cap\Pi$ is an $(n-1)$-spread.
Without loss of generality, we may assume that $S_1= (I, 0,0), S_2= (I,I, 0), S_3=(0,I,0)$, $\Pi_0=\{(x,y,0)_{\F_q} \mid x,y \in \F_{q^n}\}$ and $\Pi=\{(0,y,z)_{\F_q} \mid y,z \in \F_{q^n}\}$.

Since $\S\cap\Pi$ is an $(n-1)$-spread, there exists a spread set $\mathbf{M}$ such that
$$\S\cap\Pi=\left\{(0, A,I) \mid  A \in \mathbf{M}\right\}\cup\{(0,I,0)\}.$$

Consider two elements $R_1,R_2$ of $\S\cap\Pi$ different from $S_3$. By Lemma \ref{subplanefieldreduction}, all elements of $\mathcal{V}_p(S_1,S_2,R_1,R_2)\setminus\Pi_0$ are contained in $\S$.
It follows that the $p+1$ elements of $\R_p(S_3,R_1,R_2)$ are all contained in $\S\cap\Pi$. By Corollary \ref{IsSemifieldSpread}, we see that $\S\cap\Pi$ is a semifield spread (with $\F_p$ contained in its center and $S_3$ its shears element) hence $\mathbf{M}$ is a semifield spread set.

The elements $S_1$ and $S_2$ are normal elements, hence every element of $\S\setminus \langle S_1,S_2\rangle$ is of the form
$$(B-D,B,I)=\langle S_1, (0,B,I)\rangle\cap\langle S_2,(0,D,I)\rangle,$$
for some $B,D\in\mathbf{M}$.
Since $\mathbf{M}$ is closed under addition, every element of $\S\setminus \langle S_1,S_2\rangle$ is of the form $(A,B,I)$, with $A,B\in\mathbf{M}$.
We conclude that
$$\S= \left\{(A, B , I) \mid  A,B \in \mathbf{M}\right\}\cup \left\{(I, C , 0) \mid  C \in \mathbf{M_0}\right\}\cup\{(0,I,0)\},$$
for some spread set $\mathbf{M_0}$ (containing $0$ and $I$) and a semifield spread set $\mathbf{M}$.

Now suppose $(I,C,0) \in \Pi_0$, $C\in\Mb_0$, is a normal element for $\T_3(\mathbf{M},\mathbf{M_0})$. Given elements $A,B,D,E \in \Mb$, the spread element $(D,E,I)$ is contained in $\langle(I,C,0),(A,B,I)\rangle$ if and only if $(D-A)C = E-B$.%$C(D-A) = E-B$. 
Hence $\langle(I,C,0),(A,B,I)\rangle$ is partitioned by elements of $\T_3(\mathbf{M},\mathbf{M_0})$ if and only if $(D-A)C+B \in \Mb$ for all $A,B,D \in \Mb$. As $\Mb$ is closed under addition, this occurs if and only if $\Mb C \subseteq \Mb$, i.e.\ $C \in \Mb_0 \cap \mathcal{N}_r(\Mb)$.%i.e. $C \in \Mb_0 \cap \NN_r(\Mb)$.
\end{proof}

\begin{corollary}\label{4elementsDesarguesian}
Consider an $(n-1)$-spread $\S$ in $\PG(3n-1,q)$, $q$ odd.
If $\S$ contains $4$ normal elements not all contained in the same $(2n-1)$-space, then $\S$ is Desarguesian.
\end{corollary}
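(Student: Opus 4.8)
The plan is to treat two cases according to the relative position of the four normal elements, after first establishing a simple ``all or nothing'' dichotomy for triples of normal elements. Suppose $S_1,S_2,S_3$ are normal elements of an $(n-1)$-spread $\S$ of $\PG(3n-1,q)$. Since $S_1$ is normal, the $(2n-1)$-space $\langle S_1,S_2\rangle$ is partitioned by the elements of $\S$ contained in it; so if a point $P$ lies in $S_3\cap\langle S_1,S_2\rangle$, then the unique element of $\S$ through $P$ both equals $S_3$ and is contained in $\langle S_1,S_2\rangle$, forcing $S_3\subseteq\langle S_1,S_2\rangle$. Hence any triple of normal elements of $\S$ either spans $\PG(3n-1,q)$ or lies in a common $(2n-1)$-space. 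Consequently, if $\S$ has four normal elements not all contained in one $(2n-1)$-space, then either all four are in general position, or (after relabelling) $S_1,S_2,S_3$ lie in a common $(2n-1)$-space $\Pi_0$ while $S_4\notin\Pi_0$.

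In the first case Theorem~\ref{r+1normal} (with $r=3$) immediately yields that $\S$ is Desarguesian, so I would concentrate on the second case. As $q$ is odd we may apply Theorem~\ref{SpreadBySemifield}: after a suitable change of coordinates, $\S=\T_3(\mathbf{M},\mathbf{M_0})$ for a semifield spread set $\mathbf{M}$ and a spread set $\mathbf{M_0}$, with $\Pi_0=\{(x,y,0)_{\F_q}\}$ and $S_1=(I,0,0)$, $S_2=(I,I,0)$, $S_3=(0,I,0)$. Since $S_4\in\S$ is not contained in $\Pi_0$, it has the form $S_4=(A_0,B_0,I)$ with $A_0,B_0\in\mathbf{M}$.

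The core of the argument is then to exploit the normality of $S_4$. Let $F=(A,B,I)\in\S$ with $A\neq A_0$; then $S_4$ and $F$ are disjoint, a point of $\langle S_4,F\rangle$ has the form $(xA_0+yA,\,xB_0+yB,\,x+y)$, and it lies in $\Pi_0$ exactly when $x+y=0$. Hence $\langle S_4,F\rangle\cap\Pi_0=\{(x(A_0-A),\,x(B_0-B),\,0)\mid x\in\F_q^n\}$, which, as $A_0-A$ is nonsingular, is the $(n-1)$-space $(I,(A_0-A)^{-1}(B_0-B),0)$. Normality of $S_4$ forces this to be an element of $\S\cap\Pi_0$, that is, $(A_0-A)^{-1}(B_0-B)\in\mathbf{M_0}$. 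Since $\mathbf{M}$ is closed under addition, as $A$ runs over $\mathbf{M}\setminus\{A_0\}$ and $B$ over $\mathbf{M}$, the matrix $A_0-A$ runs over $\mathbf{M}\setminus\{0\}$ and $B_0-B$ over $\mathbf{M}$; hence $X^{-1}Y\in\mathbf{M_0}$ for all $X\in\mathbf{M}\setminus\{0\}$ and all $Y\in\mathbf{M}$. Taking $X=I$ gives $\mathbf{M}\subseteq\mathbf{M_0}$, hence $\mathbf{M}=\mathbf{M_0}$; taking $Y=I$ shows $\mathbf{M}\setminus\{0\}$ is closed under inversion, whence $XY=(X^{-1})^{-1}Y\in\mathbf{M}$ for all $X\in\mathbf{M}\setminus\{0\}$, $Y\in\mathbf{M}$. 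Thus $\mathbf{M}$ is also closed under multiplication, and by Theorem~\ref{SemiNear} it is simultaneously a semifield spread set and a nearfield spread set; the associated quasifield is therefore a field and $\mathbf{M}$ is a Desarguesian spread set. Finally, $\S=\T_3(\mathbf{M},\mathbf{M})$ is precisely the image under field reduction of all points of $\PG(2,q^n)$, so $\S$ is Desarguesian.

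The step I expect to be the main obstacle is the ``all or nothing'' dichotomy, since it is what turns an a priori arbitrary configuration of four normal elements into the two manageable cases, and it rests on the observation that a normal element is its own spread element inside any $(2n-1)$-space that meets it. The remainder is essentially bookkeeping: computing $\langle S_4,F\rangle\cap\Pi_0$ and reading off the multiplicative closure of $\mathbf{M}$, after which the identification of $\T_3(\mathbf{M},\mathbf{M})$ with the Desarguesian spread is immediate from the definition of field reduction.
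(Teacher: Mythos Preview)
Your argument is correct. The dichotomy for triples of normal elements and the general-position case are handled exactly as in the paper (indeed, the paper's reference to Theorem~\ref{UniqueDesSpread} there is a typo for Theorem~\ref{r+1normal}, which you cite correctly).

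In the remaining case your route differs from the paper's. The paper observes that in this configuration there are \emph{also} three normal elements in general position (two of the collinear triple together with the outside element), invokes Theorem~\ref{NearfieldSpread} to obtain $\S=\S_3(\mathbf{M})$ for a nearfield spread set $\mathbf{M}$, and then invokes Theorem~\ref{SpreadBySemifield} on the collinear triple to obtain $\S=\T_3(\mathbf{M}',\mathbf{M_0})$ for a semifield spread set $\mathbf{M}'$; comparing the two descriptions forces $\mathbf{M}=\mathbf{M}'=\mathbf{M_0}$, hence a Desarguesian spread set. You instead use only Theorem~\ref{SpreadBySemifield} and then exploit the normality of the single outside element $S_4$ via the explicit computation of $\langle S_4,F\rangle\cap\Pi_0$, from which multiplicative closure of $\mathbf{M}$ (and $\mathbf{M}=\mathbf{M_0}$) drops out. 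The paper's version is more structural, reusing two classification results without any new calculation; your version is slightly more hands-on but avoids appealing to Theorem~\ref{NearfieldSpread} altogether. Both are short and valid. One minor comment: the step you flag as the main obstacle---the all-or-nothing dichotomy---is in fact the easy part (the paper dispatches it in one line); the substantive content lies in the second case, which you handle cleanly.
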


\begin{proof}
It is clear that if a spread contains three normal elements, either these elements are contained in the same $(2n-1)$-space or they span a $(3n-1)$-space.

If the 4 normal elements lie in general position, the result follows from Theorem \ref{UniqueDesSpread}.

If the 4 normal elements do not lie in general position, we have that 3 of the elements do lie in general position, but the fourth is contained in a $(2n-1)$-space spanned by two of the other normal elements.
Without loss of generality, we may assume that the 4 normal elements have coordinates $(I, 0,0), (0,I, 0), (0,0,I)$ and $(I,I,0)$. From Theorem \ref{NearfieldSpread} we know there exists 
a nearfield spread set $\mathbf{M}$ such that
$$\S=\S_3(\mathbf{M})= \left\{(A_1, A_2 , A_3) \mid  A_i \in \mathbf{M}\right\}.$$
However, from Theorem \ref{SpreadBySemifield} it follows that there exists some spread set $\mathbf{M}_0$ and some semifield spread set $\mathbf{M}'$ such that
$$\S=\T_3(\mathbf{M}',\mathbf{M_0})= \left\{(A, B , I) \mid  A,B \in \mathbf{M}'\right\}\cup \left\{(I, C , 0)\mid  C \in \mathbf{M_0}\right\}\cup\{(0,I,0)\}.$$

We can conclude that $\mathbf{M}=\mathbf{M}_0=\mathbf{M}'$ is both a nearfield spread set as a semifield spread set, hence is a Desarguesian spread set. It now follows that $\S$ is a Desarguesian spread.
\end{proof}

\begin{theorem}\label{r+1normalnotgeneral}
Consider an $(n-1)$-spread $\S$ in $\PG(rn-1,q)$, $q$ odd.
If $\S$ contains at least $r+1$ normal elements, such that $r$ of the elements span the full space, then $\S$ is Desarguesian.
\end{theorem}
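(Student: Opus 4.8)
The plan is to combine the classification of spreads with $r$ normal elements in general position (Theorem~\ref{NearfieldSpread}) with the two characterisations already established (Theorem~\ref{r+1normal} and Corollary~\ref{4elementsDesarguesian}). Write $S_1,\dots,S_r$ for $r$ normal elements of $\S$ spanning the full space and $S_0$ for a further normal element, so that $S_0\notin\{S_1,\dots,S_r\}$. By Theorem~\ref{NearfieldSpread} I may assume $\S=\S_r(\mathbf{M})$ for a nearfield spread set $\mathbf{M}$, with $S_i=(0,\dots,0,I,0,\dots,0)$ the standard coordinate elements. Since $\mathbf{M}$ is closed under multiplication, every nonzero $C\in\mathbf{M}$ has $C^{-1}\in\mathbf{M}$ (the powers of $C$ run through a finite cyclic subset of $\mathbf{M}$), so each block-diagonal map $\mathrm{diag}(C_1,\dots,C_r)$ with $C_i\in\mathbf{M}\setminus\{0\}$ is a collineation stabilising $\S_r(\mathbf{M})$ and fixing every $S_i$. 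Applying such a collineation and then a permutation of the coordinates, I may assume $S_0=(I,\dots,I,0,\dots,0)$ with exactly $s$ entries equal to $I$, where $2\le s\le r$.

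I then distinguish two cases. If $s\ge 3$, put $\Sigma=\langle S_1,\dots,S_s\rangle\cong\PG(sn-1,q)$; since the $S_i$ are normal, $\S\cap\Sigma$ is an $(n-1)$-spread of $\Sigma$, equal in coordinates to $\S_s(\mathbf{M})$, and $S_0\subseteq\Sigma$, so $S_0,S_1,\dots,S_s$ are $s+1$ normal elements of $\S\cap\Sigma$, which are in general position in $\Sigma$ (the coordinate elements together with the all-ones element). As $s>2$, Theorem~\ref{r+1normal} gives that $\S_s(\mathbf{M})$ is Desarguesian. If $s=2$, put $\Sigma=\langle S_1,S_2,S_3\rangle\cong\PG(3n-1,q)$; then $\S\cap\Sigma$ is an $(n-1)$-spread of $\Sigma$ equal to $\S_3(\mathbf{M})$, having the four normal elements $S_0,S_1,S_2,S_3$, and these are not all contained in a common $(2n-1)$-space because $S_1,S_2,S_3$ already span $\Sigma$. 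Since $q$ is odd, Corollary~\ref{4elementsDesarguesian} shows $\S_3(\mathbf{M})$ is Desarguesian. In both cases I obtain that $\S_k(\mathbf{M})$ is Desarguesian, with $k=s\ge 3$ (resp.\ $k=3$).

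It remains to pass from $\S_k(\mathbf{M})$ back to $\mathbf{M}$ and hence to $\S$. As $\S_k(\mathbf{M})$ is Desarguesian, the $(n-1)$-spread it induces in the $(2n-1)$-space $\langle S_1,S_2\rangle$ — which in coordinates is exactly $\S(\mathbf{M})$ — is regular, being the field reduction of a projective line. In particular $\R_q((0,I),E_1,E_2)\subset\S(\mathbf{M})$ for all $E_1,E_2\in\S(\mathbf{M})$, so by the corollary to Theorem~\ref{IsSemifieldSpread} (note $q>2$) the spread $\S(\mathbf{M})$ is a semifield spread with shears element $(0,I)$, and hence $\mathbf{M}$ is closed under addition. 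Being closed under both addition and multiplication, $\mathbf{M}$ is a Desarguesian spread set, so $\S=\S_r(\mathbf{M})$ is the field reduction of $\PG(r-1,q^n)$ and therefore a Desarguesian spread. I expect the delicate points to be the verification that the maps $\mathrm{diag}(C_1,\dots,C_r)$ genuinely stabilise $\S_r(\mathbf{M})$ and bring $S_0$ into the stated normal form — this is exactly where multiplicative closure of $\mathbf{M}$, via $C^{-1}\in\mathbf{M}$, is used — and the final implication ``$\S_k(\mathbf{M})$ Desarguesian $\Rightarrow$ $\mathbf{M}$ a Desarguesian spread set'', for which the regularity of the induced spread in $\langle S_1,S_2\rangle$ and the semifield criterion above are the key ingredients; the remaining steps are routine bookkeeping with the explicit coordinates.
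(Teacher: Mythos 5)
Your proof is correct and follows essentially the same route as the paper: reduce to $\S_r(\mathbf{M})$ with $\mathbf{M}$ a nearfield spread set via Theorem \ref{NearfieldSpread}, split according to how many of the given normal elements are needed to span a subspace containing the extra one, and apply Corollary \ref{4elementsDesarguesian} (extra element in a $(2n-1)$-space, using $q$ odd) respectively Theorem \ref{r+1normal} (general position in a smaller subspace), before concluding that $\mathbf{M}$ is a Desarguesian spread set and hence $\S$ is Desarguesian. Your normalisation of $S_0$ by block-diagonal collineations stabilising $\S_r(\mathbf{M})$, and your explicit regularity/shears argument for ``$\S_k(\mathbf{M})$ Desarguesian $\Rightarrow$ $\mathbf{M}$ closed under addition'', are simply more detailed versions of steps the paper handles via the minimality of $t$ and a brief remark.
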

\begin{proof}
%If the set of $r+1$ normal elements lie in general position, the results follows from Theorem \ref{UniqueDesSpread}.

The spread $\S$ contains $r+1$ normal elements $S_i, i=1,\ldots,r+1$.
By assumption, $r$ of the elements span the full space, that is, lie in general position. Without loss of generality we can choose coordinates such that $$S_1= (I, 0,0, \ldots, 0), S_2= (0,I, 0, \ldots, 0),\ldots,  S_r= (0,0,0, \ldots, 0,I).$$

From Theorem \ref{NearfieldSpread} it follows that there exists a nearfield spread set $\mathbf{M}$, containing $0$ and $I$, such that
$$\S=\S_r(\mathbf{M})=\{(A_1,\ldots,A_r) \mid A_i \in \mathbf{M}\}.$$

Consider the smallest number $t\in\mathbb{N}$ such that $S_{r+1}$ is contained in a $(tn-1)$-subspace spanned by $t$ other normal elements. We may assume that $$S_{r+1}\in \Pi=\langle S_1, S_2,\ldots,S_t\rangle.$$

Suppose $t=2$, that is, $S_{r+1} \in \langle S_1,S_2\rangle$. Clearly, the $(3n-1)$-space $\langle S_1,S_2,S_3\rangle$ contains 4 normal elements, not all in the same $(2n-1)$-space. Hence, from Corollary \ref{4elementsDesarguesian} it follows that $\S \cap \langle S_1,S_2,S_3\rangle$ is Desarguesian. However, since $\S=\S_r(\mathbf{M})$ is completely defined by the spread set $\mathbf{M}$, this spread set is a Desarguesian spread set. 

Suppose $t>2$, that is, the elements $S_1,S_2,\ldots,S_t,S_{r+1}$ lie in general position with respect to the space $\Pi$. 
From Theorem \ref{UniqueDesSpread} it follows that the spread $\S \cap \Pi$ is Desarguesian. Hence, since $\S=\S_r(\mathbf{M})$ is completely defined by the spread set $\mathbf{M}$, this spread set is a Desarguesian spread set. 

We conclude that $\S$ is Desarguesian.
\end{proof}

\section{Spreads of $\PG(rn-1,q)$ containing less than $r$ normal elements}\label{lessthanr}

In this section we consider what sort of spreads arise in $\PG(rn-1,q)$ with less than $r$ normal elements.

%It is clear that if a spread contains three normal elements, either these elements are contained in the same $(2n-1)$-space or they span a $(3n-1)$-space.

We denote the points of $\PG(rn-1,q)$ by $\{(x_1,\ldots,x_r)_{\F_q} \mid x_i\in\F_{q^n}\}$.
Consider $r-1$ spread sets $\mathbf{M}_1,\ldots,\mathbf{M}_{r-1}$, containing $0$, and a nearfield spread set $\mathbf{M}$, containing $0$ and $I$. 
We define the following $(n-1)$-spread:
$$\mathcal{U}_r(\mathbf{M},\mathbf{M}_1,\ldots,\mathbf{M}_{r-1})= 
\left\{(0,A_1, A_2 , \ldots, A_{r-1} ) \mid  A_i \in \mathbf{M}\right\}
\cup 
\left\{(I,B_1, B_2 , \ldots, B_{r-1} ) \mid B_i \in \mathbf{M}_i\right\}$$
in $\PG(rn-1,q)$.
One can check that $\mathcal{U}_r(\mathbf{M},\mathbf{M}_1,\ldots,\mathbf{M}_{r-1})$ contains $r-1$ normal elements $S_i,i=1,\ldots,r-1$, namely
$$S_1= (0,I, 0,0, \ldots, 0), S_2= (0,0,I, 0, \ldots, 0),\ldots,  S_{r-1}= (0,0,0, \ldots, 0,I).$$

\begin{theorem}\label{6.1}
Consider an $(n-1)$-spread $\S$ in $\PG(rn-1,q), r>2$. If $\S$ has $r-1$ normal elements that span a $((r-1)n-1)$-space, then $\S$ is projectively equivalent to $\mathcal{U}_r(\mathbf{M},\mathbf{M}_1,\ldots,\mathbf{M}_{r-1})$, for some spread sets $\mathbf{M}_1,\ldots,\mathbf{M}_{r-1}$, containing $0$, and a nearfield spread set $\mathbf{M}$, containing $0$ and $I$.
\end{theorem}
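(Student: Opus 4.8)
The plan is to induct on $r$, the structural engine being Theorem \ref{NearfieldSpread} applied to the restriction of $\S$ to the span of its $r-1$ normal elements. First I would fix coordinates. Write $S_1,\dots,S_{r-1}$ for the normal elements and $\Pi=\langle S_1,\dots,S_{r-1}\rangle$, a $((r-1)n-1)$-space in which the $S_i$ therefore lie in general position, and choose coordinates $(x_0,x_1,\dots,x_{r-1})$ on $\PG(rn-1,q)$ so that $\Pi=\{x_0=0\}$ with $S_i$ the $i$-th coordinate $(n-1)$-space of $\Pi$. Since each $S_i$ is normal in $\S$, iterating the definition of normality shows that $\S\cap\Pi$ is an $(n-1)$-spread of $\Pi$ in which $S_1,\dots,S_{r-1}$ are normal elements in general position, and, more generally, that $\langle S_{i_1},\dots,S_{i_k},E\rangle$ is partitioned by $\S$ for every $E\in\S$ and every subset $\{S_{i_1},\dots,S_{i_k}\}$ of the normal elements. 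For $r\ge 4$, Theorem \ref{NearfieldSpread} applied to $\S\cap\Pi$ now produces a nearfield spread set $\Mb$ such that, after a coordinate change inside $\Pi$ (extended to $\PG(rn-1,q)$ fixing $x_0$ and keeping each $S_i$ a coordinate space), $\S\cap\Pi=\{(0,A_1,\dots,A_{r-1})\mid A_i\in\Mb\}$; the case $r=3$ forms the base of the induction and is treated directly, with $\S\cap\Pi=\S(\Mb)$.

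Next I would pin down the elements of $\S$ outside $\Pi$. If such an $E$ met $\Pi$ in a point $P$, then $P$ would also lie in some element of $\S\cap\Pi$ distinct from $E$, contradicting the spread property; hence $E\cap\Pi=\emptyset$, so $E$ is complementary to $\Pi$ and has the form $(I,B_1,\dots,B_{r-1})$ for unique matrices $B_i$. Let $\mathcal{B}$ be the set of tuples $(B_1,\dots,B_{r-1})$ thus obtained; a count of points gives $|\mathcal{B}|=q^{(r-1)n}$, and it then suffices to show $\mathcal{B}=\Mb_1\times\cdots\times\Mb_{r-1}$ for spread sets $\Mb_i$ containing $0$, since this identifies $\S$ with $\mathcal{U}_r(\Mb,\Mb_1,\dots,\Mb_{r-1})$. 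Fixing $i$ and using that $S_i$ is normal, the $(2n-1)$-space $\langle S_i,E\rangle$ meets $\Pi$ exactly in $S_i$ and is partitioned by $S_i$ together with exactly those $(I,B_1',\dots,B_{r-1}')\in\S$ with $B_j'=B_j$ for all $j\ne i$; counting and disjointness of spread elements then force the $i$-th fibre $\{\,B_i'\mid(B_1,\dots,B_i',\dots,B_{r-1})\in\mathcal{B}\,\}$ to be a spread set of size $q^n$, for every choice of the remaining coordinates occurring in $\mathcal{B}$. Applying the same reasoning to the $(3n-1)$-space $\langle S_i,S_j,E\rangle$ shows that the $(i,j)$-fibre of $\mathcal{B}$ is a set of $q^{2n}$ matrix pairs all of whose rows and columns are spread sets; since each of its two coordinate projections then has size $q^{2n}/q^n=q^n$, it is forced to be a Cartesian product of two spread sets. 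Hence the $j$-th fibre of $\mathcal{B}$ is unaffected when another coordinate is moved within its own fibre, and, the projection of $\mathcal{B}$ to the coordinates other than $j$ being connected under such single-coordinate moves, every fibre of $\mathcal{B}$ is constant. Thus $\mathcal{B}=\Mb_1\times\cdots\times\Mb_{r-1}$, as needed. (Alternatively, one can phrase the inductive step through the quotient: $\S/S_{r-1}$ is a spread in $\PG((r-1)n-1,q)$ whose normal elements $\bar S_1,\dots,\bar S_{r-2}$ span the image of $\Pi$, so by the inductive hypothesis the first $r-2$ coordinates of $\mathcal{B}$ already form a product, and the pair $S_{r-1},S_i$ handles the last coordinate exactly as above.)

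The hard part will be this final step: promoting ``every fibre of $\mathcal{B}$ is a spread set'' to ``$\mathcal{B}$ is a full Cartesian product of spread sets''. The lever is the elementary observation that a $q^{2n}$-element set of pairs of $n\times n$ matrices all of whose rows and columns are spread sets must, on cardinality grounds, be a product of two spread sets; joint normality of a pair $S_i,S_j$ is exactly what supplies this, and it is what propagates the product structure across all $r-1$ coordinates and down the induction. Everything else --- that $\S\cap\Pi$ is a spread of $\Pi$, that the relevant spans are partitioned by $\S$, and the various point counts --- is routine.
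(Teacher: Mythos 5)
Your overall route is the same as the paper's: restrict $\S$ to $\Pi=\langle S_1,\ldots,S_{r-1}\rangle$, apply Theorem \ref{NearfieldSpread} there to get the nearfield spread set $\Mb$, note that every other element is disjoint from $\Pi$ and hence of the form $(I,B_1,\ldots,B_{r-1})$, and then use normality of the $S_i$ to force the set $\mathcal{B}$ of tuples to be a Cartesian product of spread sets. The only divergence is the bookkeeping in that last step: the paper normalises one element to $P=(I,0,\ldots,0)$, reads off $\Mb_i$ from $\langle P,S_i\rangle\cap\S$, and builds the product one coordinate at a time via intersections such as $\langle S_2,(I,B_1,0,\ldots,0)\rangle\cap\langle S_1,(I,0,B_2,0,\ldots,0)\rangle$ inside $\langle P,S_1,\ldots,S_k\rangle$, whereas you argue with one- and two-coordinate fibres. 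Your fibre computations are correct (granting the routine fact, which you assert and which does hold, that $\langle S_{i_1},\ldots,S_{i_k},E\rangle$ is partitioned by $\S$), but the phrase ``the projection of $\mathcal{B}$ to the coordinates other than $j$ being connected under such single-coordinate moves'' is an assertion, not a proof, and it is not obviously available at that stage. Fortunately you do not need it: by your own observation, every single-coordinate move preserves all $r-1$ fibres at the current tuple, so along the component of any tuple $B$ under such moves the fibres are constant, and that component already contains the full product of the $r-1$ fibres of $B$, a set of size $q^{(r-1)n}=|\mathcal{B}|$; hence $\mathcal{B}$ equals this product and connectivity falls out rather than being an input. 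With that one-line repair your argument is complete for $r\geq 4$; the induction on $r$ (and the quotient variant) is then superfluous, and indeed the paper treats all $r$ in one pass.

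The genuine gap is the base case $r=3$, which you dispose of with ``treated directly, with $\S\cap\Pi=\S(\Mb)$''. For $r=3$ the space $\Pi$ is a $(2n-1)$-space containing only two normal elements, so Theorem \ref{NearfieldSpread} (which requires $r>2$, i.e.\ three normal elements in general position) is not applicable, and nothing in your outline produces the nearfield property of $\Mb$ that the statement demands. Worse, normality of $S_1,S_2$ gives no information about $\S\cap\Pi$ beyond its being a spread: for $F\in\S\cap\Pi$ the span $\langle S_i,F\rangle$ is all of $\Pi$, and for $F$ outside $\Pi$ the partition of $\langle S_i,F\rangle$ constrains only the elements outside $\Pi$. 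Concretely, one may take an arbitrary spread on $\Pi$ containing $S_1=(0,I,0)$ and $S_2=(0,0,I)$ together with $\{(I,B,C)\mid B\in\Mb_1, C\in\Mb_2\}$ outside, and $S_1,S_2$ remain normal; so no argument of the shape you propose can extract nearfieldness of $\S\cap\Pi$ when $r=3$. To be fair, the paper's own proof is equally thin here (it appeals to ``the proof of'' Theorem \ref{NearfieldSpread}, which likewise needs three normal elements in general position), so you are no worse off than the paper, but the promised direct treatment of the base case is missing and it is precisely where the nearfield conclusion would have to come from.
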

\begin{proof}
Consider an $(n-1)$-spread $\S$ of $\PG(rn-1,q)$ having $r-1$ normal elements $S_1,\ldots,S_{r-1}$ that span a $((r-1)n-1)$-space. Without loss of generality, we may assume $$S_1= (0,I, 0,0, \ldots, 0), S_2= (0,0,I, 0, \ldots, 0),\ldots,  S_{r-1}= (0,0,0, \ldots, 0,I).$$

Consider the $((r-1)n-1)$-subspace $\Pi_0=\langle S_1, S_2,\ldots,S_{r-1}\rangle$. Clearly, $\Pi_0$ corresponds to set of points with coordinates  $\{(0,x_2,\ldots,x_{r})_{\F_q} \mid x_i\in\F_{q^n}\}$.  Since $\Pi_0$ contains $r-1$ normal elements in general position, from (the proof of) Theorem \ref{NearfieldSpread} it follows that there exists a nearfield spread set $\mathbf{M}$, containing $0$ and $I$, such that $$\S_0=\S\cap\Pi_0= \left\{(0,A_1, A_2 , \ldots, A_{r-1} ) \mid  A_i \in \mathbf{M}\right\}.$$

All elements of $\S\setminus\S_0$ are disjoint from $\Pi_0$, thus we may assume that $P=(I,0,\ldots,0)$ is contained in $\S$.
As all elements $S_i$ are normal, each $(2n-1)$-space $\langle P,S_i\rangle$ intersects $\S$ in an $(n-1)$-spread. Hence, there exist spread sets $\mathbf{M}_i, i=1,\ldots,r-1$, containing the zero matrix 0, such that 
\begin{align*}
\langle P, S_1 \rangle \cap \S =&\, \left\{(I, B_1 ,0 , \ldots, 0 ) \mid B_1 \in \mathbf{M_1}\right\}\cup\{(0,I,0,\ldots,0)\},\\
\langle P, S_2 \rangle \cap \S  =&\, \left\{(I,0,B_2, 0 , \ldots, 0 ) \mid  B_2 \in \mathbf{M_2}\right\}\cup\{(0,0,I,0\ldots,0)\},\\
\ldots\\
\langle P, S_{r-1} \rangle \cap \S  =&\, \left\{(I,0, \ldots, 0,B_{r-1} ) \mid  B_{r-1} \in \mathbf{M_{r-1}}\right\}\cup\{(0,\ldots,0,I)\}
\end{align*}

Consider the $(3n-1)$-space $\pi=\langle P,S_1,S_2\rangle$. Since $S_1$ and $S_2$ are normal elements, we have that $\pi$ intersects $\S$ in an $(n-1)$-spread. Any element $R \in \S\cap(\pi \setminus \langle S_1,S_2\rangle)$ can be obtained as 
$$R=(I,B_1,B_2,0,\ldots,0)=\langle S_2, (I,B_1,0,\ldots,0)\rangle \cap \langle S_1, (I,0,B_2,0,\ldots,0) \rangle,$$
for some $B_1 \in \mathbf{M}_1, B_2 \in \mathbf{M}_2$.

Similarly, any element in $\S\cap(\langle P,S_1,S_3\rangle \setminus \langle S_1,S_3\rangle)$ can be obtained as 
$$(I,B_1,0,B_3,0,\ldots,0)=\langle S_3, (I,B_1,0,\ldots,0)\rangle \cap \langle S_1, (I,0,0,B_3,0,\ldots,0) \rangle,$$
for some $B_1 \in \mathbf{M}_1, B_3 \in \mathbf{M}_3$.

Consider the $(4n-1)$-space $\pi'=\langle P,S_1,S_2,S_3\rangle$. Since $S_1,S_2, S_3$ are normal elements, we have that $\pi'$ intersects $\S$ in an $(n-1)$-spread. Any element $R' \in \S\cap(\pi' \setminus \langle S_1,S_2,S_3\rangle)$ can be obtained as 
$$R'=(I,B_1,B_2,B_3,0,\ldots,0)=\langle S_3, (I,B_1,B_2,0,\ldots,0)\rangle \cap \langle S_2, (I,B_1,0,B_3,0,\ldots,0) \rangle,$$
for some $B_1 \in \mathbf{M}_1, B_2 \in \mathbf{M}_2, B_3 \in \mathbf{M}_3$.

Continuing this process, we obtain coordinates for all elements of $\S \setminus \S_0$, that is, all elements are of the form 
$$(I,B_1,\ldots,B_{r-1}),$$
for some $B_i \in \mathbf{M}_i, i=1,\ldots, r-1$.
The statement now follows.
\end{proof}

Following the proof of Theorem \ref{6.1} we obtain the following corollary.

\begin{corollary}
Consider an $(n-1)$-spread $\S$ in $\PG(rn-1,q), r>2$. If $\S$ has $k< r-1$ normal elements that span a $(kn-1)$-space $\pi$, then there exist $((k+1)n-1)$-spaces $\Pi_j, j=1,2,\ldots, m=\frac{q^{(r-k-1)n}-1}{q^n-1}$ such that
\begin{itemize}
\item $\forall j \neq j': \Pi_j \cap \Pi_j' = \pi$,
\item $\langle \Pi_1,\ldots,\Pi_m\rangle =\PG(rn-1,q)$,
\item $\pi \cap \S \cong \mathcal{S}_k(\mathbf{M})$,
\item $\Pi_j \cap \S \cong \mathcal{U}_{k+1}(\mathbf{M},\mathbf{M}_{1,j},\ldots,\mathbf{M}_{k,j})$,
\end{itemize}
for some spread sets $\mathbf{M}_{1,1},\ldots,\mathbf{M}_{k,1},\ldots,\mathbf{M}_{1,m},\ldots,\mathbf{M}_{k,m}$ and nearfield spread set $\mathbf{M}$.
\end{corollary}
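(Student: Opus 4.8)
The plan is to mimic the structure already used for Theorem~\ref{6.1}, but iterate the ``peeling off'' argument one layer at a time, replacing the single distinguished point $P$ by a whole family of parallel $((k+1)n-1)$-subspaces through $\pi$. First I would fix coordinates so that the $k$ normal elements in general position become $S_1=(0,\ldots,0,I,0,\ldots,0)$ with the $I$ in position $r-k+i$ for $i=1,\ldots,k$, and $\pi=\langle S_1,\ldots,S_k\rangle$ is the set of points $\{(0,\ldots,0,x_{r-k+1},\ldots,x_r)_{\F_q}\}$. Since $\pi$ carries $k$ normal elements in general position, the proof of Theorem~\ref{NearfieldSpread} gives a nearfield spread set $\mathbf{M}$ (containing $0$ and $I$) with $\S\cap\pi\cong\mathcal{S}_k(\mathbf{M})$; this establishes the third bullet.

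Next I would partition $\S\setminus(\S\cap\pi)$ by the residue modulo $\pi$: two spread elements $E,F$ disjoint from $\pi$ are declared equivalent if $\langle E,\pi\rangle=\langle F,\pi\rangle$. Each such span is an $((k+1)n-1)$-space $\Pi_j$ meeting $\pi$ exactly in a $(kn-1)$-space, and because the $S_i$ are normal, $\S\cap\Pi_j$ is an $(n-1)$-spread; since $\S\cap\Pi_j\supseteq\S\cap\pi$ already contains the $k$ normal elements $S_1,\ldots,S_k$ (spanning $\pi$), the induced spread on each $\Pi_j$ has $k$ normal elements spanning a $(kn-1)$-subspace, so Theorem~\ref{6.1} applies with $r=k+1$ and yields nearfield spread set $\mathbf{M}$ (the \emph{same} one, since it is determined by $\S\cap\pi$) together with spread sets $\mathbf{M}_{1,j},\ldots,\mathbf{M}_{k,j}$ and $\Pi_j\cap\S\cong\mathcal{U}_{k+1}(\mathbf{M},\mathbf{M}_{1,j},\ldots,\mathbf{M}_{k,j})$. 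That handles the fourth bullet. The first bullet, $\Pi_j\cap\Pi_{j'}=\pi$ for $j\neq j'$, is immediate from the definition of the equivalence (if two distinct spans shared more than $\pi$ they would coincide, since each is spanned by $\pi$ together with one extra spread element disjoint from $\pi$); the second bullet follows because every point of $\PG(rn-1,q)$ outside $\pi$ lies on a unique spread element, hence in some $\Pi_j$, so $\langle\Pi_1,\ldots,\Pi_m\rangle$ contains all points and equals the whole space. The count $m=\frac{q^{(r-k-1)n}-1}{q^n-1}$ is then just the number of $((k+1)n-1)$-spaces through a fixed $(kn-1)$-space in $\PG(rn-1,q)$, i.e.\ the number of points of the quotient $\PG((r-k)n-1,q)/\,\text{(quotient of }\pi)\cong\PG((r-k-1)n-1,q^0)$—more precisely the number of $(n-1)$-dimensional subspaces through the origin in a quotient of $\F_q$-dimension $(r-k)n$ modulo one of dimension $kn$, which is $(q^{(r-k-1)n}-1)/(q^n-1)$ once one notes the spread structure forces these residues to have $\F_{q^n}$-rank one.

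The main obstacle I anticipate is \emph{consistency of the nearfield spread set across the different $\Pi_j$}: a priori Theorem~\ref{6.1} applied to $\Pi_j$ only guarantees \emph{some} nearfield spread set, and I must argue it is the one coming from $\S\cap\pi$. This is in fact forced, because in the proof of Theorem~\ref{6.1} the nearfield spread set is extracted precisely from the induced spread on the spanning $(kn-1)$-subspace of normal elements, which here is $\pi$ itself with its fixed spread $\mathcal{S}_k(\mathbf{M})$; so the same $\mathbf{M}$ must appear for every $j$. A secondary, more bookkeeping issue is verifying that the residues $\Pi_j$ genuinely behave like the points of a projective space over $\F_{q^n}$ (so that the count $m$ and the spanning property come out cleanly): this is where one uses that $\S\cap\pi$ is Desarguesian-like enough—being a nearfield spread, $\pi$ and its quotient inherit an $\F_{q^n}$-linear structure under which the $\Pi_j$ are exactly the field-reduced points of $\PG(r-k-1,q^n)$. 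I would phrase this last point by working in the quotient $\PG(rn-1,q)/\pi'$ for a suitable $(kn-1)$-subspace $\pi'$ of $\pi$ lying in a spread element, where the argument reduces to the already-understood behaviour of normal elements in smaller-dimensional spaces. Apart from these two points the proof is a routine unwinding of the inductive ``projection'' technique used throughout Section~\ref{r normal elements} and Section~\ref{lessthanr}, so I would keep the write-up short and refer back to the proof of Theorem~\ref{6.1} for the repeated steps.
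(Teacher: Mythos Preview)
Your plan is exactly what the paper intends: it gives no separate argument beyond ``following the proof of Theorem~\ref{6.1}'', and defining $\Pi_j=\langle\pi,E_j\rangle$ for spread elements $E_j$ disjoint from $\pi$, then invoking Theorem~\ref{6.1} inside each $\Pi_j$, is precisely that route. Your remark that the nearfield spread set $\mathbf{M}$ must agree across all $j$---because in the proof of Theorem~\ref{6.1} it is read off from the induced spread on the span of the normal elements, which here is $\S\cap\pi$ itself---is correct and is really the only point that needed comment.

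Where your write-up goes astray is the count of the $\Pi_j$. The printed value $m=\frac{q^{(r-k-1)n}-1}{q^n-1}$ is a misprint in the statement: the $\Pi_j$ in fact partition $\PG(rn-1,q)\setminus\pi$, each $\Pi_j$ contains exactly $q^{kn}$ spread elements outside $\pi$, and $\S$ has $q^{kn}\cdot\frac{q^{(r-k)n}-1}{q^n-1}$ elements outside $\pi$ in total, so the correct count is $m=\frac{q^{(r-k)n}-1}{q^n-1}$. (Sanity check: for $k=r-2$ the printed formula gives $m=1$, yet a single $((r-1)n-1)$-space through $\pi$ cannot span $\PG(rn-1,q)$, contradicting the second bullet; and for $k=r-1$, the setting of Theorem~\ref{6.1} itself, it gives $m=0$ rather than $m=1$.) Your paragraph about residues of ``$\F_{q^n}$-rank one'' and quotienting by some $\pi'\subset\pi$ is an attempt to manufacture the wrong number and should be dropped; with the correct $m$ the count is the one-line double count above. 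Two minor further points: the assertion that $\S\cap\Pi_j$ is a spread of $\Pi_j$ is the step that makes everything work and deserves one sentence of justification (inductively cover $\langle S_1,\ldots,S_t,E_j\rangle$ by the partitioned $(2n-1)$-spaces $\langle S_t,G\rangle$ for $G\in\S\cap\langle S_1,\ldots,S_{t-1},E_j\rangle$); and Theorem~\ref{6.1} is stated for $r>2$, so its invocation on $\Pi_j$ requires $k\geq 2$, while for $k=1$ the fourth bullet follows directly from the definition of a normal element.
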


\begin{remark}
It is clear from the above that it is easy to find spreads in $\PG(rn-1,q)$ with fewer than $r$ normal elements; we simply choose spread sets $\mathbf{M}_i$   which are not nearfield spread sets, and define a spread $\left\{(I, B_2 , \ldots, B_r ) \mid B_i \in \mathbf{M}_i\right\}\cup \S'$, where $\S'$ is some spread of the subspace $\{(0,x_2,\ldots,x_r)_{\F_q} \mid x_i\in\F_{q^n}\}$. The relative abundance of spread sets compared to nearfield spread sets ensures that in general, the majority of spreads will have few normal elements. However, also note that not every spread is of this form, as it is not necessarily true that a spread of $\PG(rn-1,q)$ defines a spread in one (or more) of the $((r-1)n-1)$-dimensional subspaces of $\PG(rn-1,q)$.
\end{remark}

\section{Spreads and translation Sperner spaces}
\label{sperner}

In this section we outline the connection between spreads and incidence structures known as {\it Sperner spaces}. We follow the terminology of \cite{HandbookTranslationPlanes}. In order to translate the results of this paper into results on Sperner spaces, we will need to introduce the concepts of normality and general position in this context.

\subsection{Preliminary definitions}

\begin{definition}
A $2-(v,k,1)$ design is a point-line incidence structure $(\P,\L)$ such that the set $\P$ of {\it points} satisfies $|\P|=v$, the set $\L$ of lines consists of $k$-subsets of $\P$ and any two points are contained in precisely one line. A $2-(v,k,1)$ design is said to have a {\it parallelism} if there is a partition of $\L$ into sets, such that each set defines a partition of $\P$. A $2-(v,k,1)$ design with parallelism is known as a {\it Sperner space} with parameters $(v,k)$. These are sometimes also referred to as {\em weak affine spaces}. A $2-(k^2,k,1)$ design is simply an affine plane.
\end{definition}

By the Barlotti-Cofman construction \cite{BarlottiCofman} every $(n-1)$-spread $\S$ in $\PG(rn-1,q)$ defines a Sperner space $\T(\S)$ with parameters $(q^{rn},q^n)$.
For this construction, we embed $\PG(rn-1,q)$ as a hyperplane $H_\infty$ in $\PG(rn,q)$. We define the design $\T(\S)=(\P,\L)$ whose:
\begin{itemize}
\item
points $\P$ are the {\em affine} points, i.e.\ the points of $\PG(rn,q)\backslash H_\infty$;
\item
lines $\L$ are the $n$-spaces intersecting $H_\infty$ precisely in an element of $\S$;
\item
parallel classes are the sets of $n$-spaces through a given element of $\S$.
\end{itemize}
This construction gives a special type of Sperner space, called a {\it translation Sperner space}. Any translation Sperner space is of this form; see \cite[Chapter 2]{Dembowski} for more on this correspondence. When $r=2$, this is exactly the Andr\'e/Bruck-Bose correspondence between $(n-1)$-spreads of $\PG(2n-1,q)$ and affine translation planes.

Note that when $\S$ is Desarguesian, the design $\T(\S)$ is isomorphic to $\AG(r,q^n)$.

\begin{definition}
Consider a Sperner space $(\P,\L)$. For a subset $\P'$ of $\P$, the {\it linear manifold} $\M(\P')$ of $\P'$, is defined as the induced incidence structure on the smallest subset $\M$ of $\P$ containing $\P'$ such that for any $A,B\in \M$, the point set $\M$ contains all points of the line $\langle A,B\rangle$. For $\P'$ a set of three non-collinear points, the linear manifold defined by $\P'$ is called a {\it pseudo-plane} \cite{BarlottiCofman}. 
For a subset $\L'\subset\L$ of lines, we define $\M(\L')$ to be the linear manifold defined by the union of all points of the lines of $\L'$.
\end{definition}
Note that it is easy to verify that a Sperner space is an affine space if and only if
each of its pseudo-planes is an affine plane.
We wish to translate our results in this context; for this we introduce the following definition.
\begin{definition}
A line $\ell$ in a Sperner space $\T$ will be called {\em normal} if 
every pseudo-plane containing $\ell$ is an affine plane.
%the following ``affine'' version of the Veblen-Young axiom holds:
%\begin{itemize}
%\item[] {\it For any line $m$ intersecting $\ell$ in a point $P$ the following holds: 

%$(*)$ for any distinct points $A,B\in \ell \backslash \{P\}$, $C,D \in m \backslash \{P\}$, the lines $AC$ and $BD$ intersect or belong to the same parallel class.}
%\end{itemize}
\end{definition}

%In other words, the pseudo-plane defined by $\ell$ and any line $m$ meeting $\ell$ is in fact an affine plane. 
Note that a line is normal if and only if every line in its parallel class is normal, and so we could equally define the normality of a parallel class. A normal line thus implies that $\T$ contains many affine planes. %Barlotti-Cofman \cite[Section 4]{BarlottiCofman} gave a characterisation of translation Sperner spaces in terms of their subplanes, and the results of this paper can also be considered in that context.

We now consider the exact correspondence between a normal element of a spread and a normal line of the corresponding translation Sperner space.
\begin{theorem}
\label{thm:spernernormal}
A line in the translation Sperner space $\T(\S)$ defined by a spread $\S$ is normal if and only if it corresponds to an $n$-space intersecting the hyperplane $H_\infty$ in a normal element of $\S$.
\end{theorem}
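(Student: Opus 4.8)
The plan is to unwind both sides of the equivalence in terms of the Barlotti--Cofman construction and reduce everything to a statement about when a pseudo-plane is an affine plane. A line $\ell$ of $\T(\S)$ corresponds to an $n$-space $L$ of $\PG(rn,q)$ meeting $H_\infty$ in an element $E \in \S$; the parallel class of $\ell$ consists of all $n$-spaces through $E$. First I would fix a pseudo-plane $P$ containing $\ell$: by definition it is the linear manifold generated by $\ell$ together with an affine point $x$ not on $\ell$. Since the lines through $x$ are $n$-spaces through $x$ meeting $H_\infty$ in spread elements, and the whole configuration lives inside the span $\langle L, x\rangle$, the key observation is that $\langle L, x\rangle$ is a $(2n)$-space meeting $H_\infty$ in a $(2n-1)$-space $W$ with $E \subset W$. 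I would then argue that the points of the pseudo-plane $P$ are exactly the affine points of $\langle L, x\rangle$ that lie on lines of $\T(\S)$, i.e.\ on $n$-spaces through $x$ (and their translates) meeting $H_\infty$ in elements of $\S \cap W$.

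The crux is the following dichotomy. If $\S$ induces a spread of $W$ (which is precisely the condition that $E$ be normal, applied to the element $F$ with $W = \langle E, F\rangle$ for any spread element $F \subset W$ other than $E$), then $\S \cap W$ is an $(n-1)$-spread of $W$, and by the $r=2$ case of Barlotti--Cofman the induced incidence structure on the affine points of $\langle L, x\rangle$ is a translation plane, hence an affine plane; restricting to the pseudo-plane $P$ I get an affine plane. Conversely, if $E$ is not normal, there is some $F \in \S$ with $\langle E, F\rangle$ not partitioned by spread elements; taking $W = \langle E, F\rangle$ and picking an affine point $x$ on the $n$-space through a point of $F$, the induced structure on $\langle L, x \rangle$ fails to be an affine plane, and I would check this failure is inherited by the pseudo-plane generated by $\ell$ and $x$ — the missing points mean two lines of the pseudo-plane in distinct parallel classes fail to meet, or a point of $W$ is covered by too few/many spread elements.

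So the steps, in order, are: (1) translate $\ell$, its parallel class, and an arbitrary pseudo-plane $P \ni \ell$ into projective terms inside a $(2n)$-space $\langle L, x\rangle$ and identify $W := \langle L,x\rangle \cap H_\infty$; (2) show $P$ is an affine plane iff $\S \cap W$ is a spread of $W$, using the $r=2$ Andr\'e/Bruck--Bose correspondence in one direction and a covering/counting obstruction in the other; (3) observe that as $x$ ranges over affine points off $\ell$, the $(2n-1)$-spaces $W$ range over exactly the spaces $\langle E, F\rangle$ for $F \in \S \setminus \{E\}$, so ``every pseudo-plane through $\ell$ is an affine plane'' is equivalent to ``$\langle E, F\rangle$ is partitioned by $\S$ for every $F$'', which is the definition of $E$ being normal.

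The main obstacle I anticipate is step (2) in the ``only if'' direction: one must produce a concrete configuration inside a non-normal pseudo-plane that violates an affine-plane axiom, and be careful that the obstruction survives passing from the full induced structure on the $(2n)$-space to the (possibly smaller) linear manifold generated by just $\ell$ and $x$. The cleanest route is probably to note that the linear manifold generated by $\ell$ and $x$ already contains, for each point $P$ of $W$ other than on $E$, the affine point where the line of $\T(\S)$ through $x$ in the direction $F_P$ (the spread element on $P$) meets a fixed affine line — so the pseudo-plane ``sees'' all of $W$, and whether it closes up into an affine plane is governed entirely by whether the $F_P$ partition $W$; a point of $W$ lying in no spread element meeting $W$ in an $(n-1)$-space, or in two, is exactly what breaks the plane. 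Once the pseudo-plane is identified with (the Bruck--Bose image of) $\S \cap W$, everything is routine.
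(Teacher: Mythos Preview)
Your overall strategy is on the right track for the forward direction, but there is a concrete dimension error that undermines the setup. You assert that $\langle L, x\rangle$ is a $(2n)$-space meeting $H_\infty$ in a $(2n-1)$-space $W$; however $L$ is an $n$-space and $x$ a single affine point not on $L$, so the projective span $\langle L, x\rangle$ has dimension $n+1$, and $\langle L, x\rangle \cap H_\infty$ has dimension $n$, not $2n-1$. The $(2n)$-space you actually need is $\langle L, T\rangle$ for a \emph{second} spread element $T\in\S\setminus\{E\}$, which is disjoint from $L$ and hence gives $\dim\langle L,T\rangle = n+(n-1)+1 = 2n$; this is how the paper organises the argument. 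With that correction the forward direction goes through essentially as you describe: when $E$ is normal one takes any $Q\notin\ell$, lets $T$ be the spread element through $QR\cap H_\infty$ for some $R\in\ell$, and identifies the pseudo-plane with the Andr\'e/Bruck--Bose plane $\T(\S\cap\langle E,T\rangle)$ inside the $2n$-space $\langle L,T\rangle$.

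The more substantive gap is in the converse. Your plan is to confine the pseudo-plane to a $(2n)$-space $\Pi$ and detect the failure of $\S\cap W$ to be a spread as an affine-plane axiom failing \emph{inside} $\Pi$ (``two lines in distinct parallel classes fail to meet''). But the lines of $\T(\S)$ are determined by the whole spread $\S$, not by $\S\cap W$: if a point $P\in W$ lies on a spread element $U$ with $U\not\subset W$, then the Sperner line joining two affine points of $\Pi$ that are collinear with $P$ is the $n$-space through $U$, and this $n$-space leaves $\Pi$. So the pseudo-plane is \emph{not} contained in $\Pi$, and there is no well-defined ``Bruck--Bose image of $\S\cap W$'' to analyse. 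The paper's argument runs in the opposite direction to yours: choose $T,U\in\S$ witnessing non-normality, i.e.\ with $\langle E,T\rangle\cap U\notin\{\emptyset,U\}$, set $\Pi=\langle L,T\rangle$, show (using only the two fully contained elements $E,T\subset W$) that the pseudo-plane already contains all $q^{2n}$ affine points of $\Pi$, and then exhibit a Sperner line in direction $U$ producing affine points strictly outside $\Pi$. The pseudo-plane therefore has more than $q^{2n}$ points and cannot be an affine plane of order $q^n$. Your step~(2) should be reorganised around this escape phenomenon rather than an internal obstruction in $W$.
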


\begin{proof}
Consider a spread $\S$ in $\PG(rn-1,q)$ and its corresponding translation Sperner space $\T(\S)=(\P,\L)$ with parameters $(q^n,q^{rn})$.

First, consider a normal element $S$ of $\S$. %First let us suppose that $S$ is a normal element. 
Let $\ell$ be a line of $\L$ corresponding to an $n$-space containing $S$. 
Take any affine point $Q\in\P$ not incident with $\ell$. The line $\ell$ and the point $Q$ define a unique pseudo-plane. 

Consider a point $R$ of $\ell$ and consider the affine line $QR$. The point $P_\infty = QR \cap H_\infty$ is contained in a unique spread element $T \in \S$ different from $S$.
%Let $m$ be a line of $\L$ intersecting $\ell$ in an affine point $P \in \P$, and let $T$ be its defining spread element of $\S$. 
As $S$ is a normal element, the $(2n-1)$-space $\pi=\langle S,T\rangle$ intersects $\S$ in an $(n-1)$- spread $\S_0=\S\cap\pi$. Clearly, the line $\ell$ and the point $Q$ are contained in the affine translation plane $\T(\S_0)$ defined in the $2n$-space $\langle \pi, Q\rangle$. Hence, the unique pseudo-plane defined by $\ell$ and $Q$ is exactly $\T(\S_0)$. 
As this is true for any point $Q$ not incident with $\ell$, it follows that $\ell$ is a normal line.

Consider now a spread element $S \in \S$ that is {\it not} normal. Let $\ell$ be a line of $\L$ corresponding to an $n$-space containing $S$. 
As $S$ is not normal, there exist spread elements $T, U$ of $\S$  such that $\langle S,T\rangle \cap U \notin \{\emptyset,U\}$. 
Consider the unique $2n$-space $\Pi$ containing both the spread element $T$ and the $n$-space corresponding to $\ell$. Hence, $\Pi \cap H_\infty = \langle S,T\rangle$.

There exist affine points $R\in\ell$ and $Q \in \Pi$ such that $P_\infty=QR\cap H_\infty$ is contained in $T$. Consider the  unique pseudo-plane $\alpha$ defined by $\ell$ and $Q$. All affine points of the $n$-space $\langle Q, T \rangle$ are contained in $\alpha$ (as this $n$-space corresponds to the unique line of $\L$ defined by $Q$ and $R$). Clearly, all affine points of the $n$-space corresponding to $\ell$ are also contained in $\alpha$. It is now easy to see that all $q^{2n}$ affine points of $\Pi$ are contained in $\alpha$.

As the spread element $U$ intersects $\langle S,T \rangle $ non-empty, there exist affine points $R' \in \ell$ and $Q' \in \Pi$ such that the point $P'_\infty=R'Q'\cap H_\infty$ is contained in $U$.
Hence, all affine points of the $n$-space $\langle Q', U\rangle$ are contained in $\alpha$. As $U$ is not completely contained in $\langle S,T\rangle$, some of these affine points are not contained in $\Pi$. 

The pseudo-plane $\alpha$ contains more than $q^{2n}$ points, hence is not an affine plane. We conclude that $\ell$ is not a normal line.
%By assumption $U \cap \pi$ is $(k-1)$-dimensional with $0\leq k\leq n-2$. If a line of $\L$ corresponding to an $n$-space containing $U\in\S$ intersects $\langle \pi,P\rangle$, then it intersects it in a $k$-space. There are $q^k$ lines in the parallel class defined by $U$ intersecting both $\ell$ and $m$.
%{\it Want to show: There are affine points $A,B$ in $\langle S,P\rangle$ and $C,D$ in  $\langle T,P\rangle$ such that $\langle A,C,U\rangle$ and $\rangle B,D,W$ don't meet, where $U$ is the spread element meeting the line $AC$ and $W$ is the spread element meeting $BD$}
%%%%%
%Suppose now that $\ell$ is a normal line in $\L$ and say $S$ is the corresponding spread element of $\S$. Take a spread element $T \in \S$ different from $S$. Consider the $(2n-1)$-space $\pi=\langle S,T\rangle$. We will show that $\S$ induces an $(n-1)$-spread on $\pi$.
%Take an affine point $P$ corresponding to a point of $\ell$. Say $m$ is the line of $\L$ uniquely defined by the affine point $P$ and the $(n-1)$-space $T$. As property $(*)$ is satisfied, for any distinct points $A,B\in \ell \backslash \{P\}$, $C,D \in m \backslash \{P\}$, the lines $AC$ and $BD$ intersect or belong to the same parallel class. 
\end{proof}

%\begin{remark}
%Note that for any normal line $\ell$ in a Sperner space $\T$, the following ``affine'' version of the Veblen-Young axiom holds:

%{\it For any line $m$ intersecting $\ell$ in a point $P$, and for any distinct points $A,B\in \ell \backslash \{P\}$, $C,D \in m \backslash \{P\}$, the lines $AC$ and $BD$ intersect.}
%OR ARE PARALLEL
%\end{remark}
 
A set $N$ of normal lines through a common point $P$ is said to be in {\it general position} if for every line $\ell\in N$ and every subset $N'$ of $N\backslash \{l\}$ of size at most $r-1$, it holds that $\ell \cap \M(N') = \{P\}$, i.e.\ $\ell$ meets the linear manifold defined by $r-1$ of the other lines only in $P$. 

Note that for a set of normal lines through a common point in a translation Sperner space, the linear manifold they define is in fact a translation Sperner subspace.
\subsection{Results in terms of translation Sperner Spaces}

In this section we summarise our results, translated into the language of Sperner Spaces. Given Theorem \ref{thm:spernernormal}, the results follow immediately from Theorem \ref{r+1normal}, Theorem \ref{SpreadBySemifield}, Corollary \ref{4elementsDesarguesian} and Theorem \ref{r+1normalnotgeneral}.

\begin{corollary}
\label{r+1normalSperner}
Consider a translation Sperner space $\T$ with parameters $(q^{rn},q^n)$, $r>2$. 

If $\T$ contains $r+1$ normal lines through a common point in general position, then $\T$ is isomorphic to the affine space $\AG(r,q^n)$.

Suppose $r=3$ and $q$ odd. If $\S$ contains $3$ normal lines through a common point, contained in an affine plane of $\T$, then any plane containing exactly one of these lines is in fact an affine semifield plane. If $\T$ contains four normal lines through a common point, not all contained in a common affine plane, then $\T$ is isomorphic to the affine space $\AG(3,q^n)$.

Suppose $q$ odd. If $\T$ contains at least $r+1$ normal elements through a common point, such that the linear manifold defined by some $r$ of them is equal to $\T$, then $\T$ is is isomorphic to the affine space $\AG(r,q^n)$.
\end{corollary}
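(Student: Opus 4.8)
The plan is to set up a precise dictionary between the data of the corollary --- normal lines through a common point of a translation Sperner space, their linear manifolds, and their affine subplanes --- and the corresponding data for the defining spread, and then read off each statement from the results of Sections~\ref{r+1 normal elements} and~\ref{3 on a line}. First I would invoke the Barlotti--Cofman correspondence: since $\T$ is a translation Sperner space with parameters $(q^{rn},q^n)$, we may write $\T=\T(\S)$ for some $(n-1)$-spread $\S$ of $\PG(rn-1,q)$, with $H_\infty$ the hyperplane at infinity. Fixing the common point $P$ of the normal lines, every line of $\T$ through $P$ is of the form $\langle P,S\rangle$ for a unique $S\in\S$, and this gives a bijection between lines of $\T$ through $P$ and elements of $\S$; by Theorem~\ref{thm:spernernormal} it restricts to a bijection between normal lines through $P$ and normal elements of $\S$. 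So the normal lines occurring in the hypotheses correspond to normal elements $S_0,S_1,\dots$ of $\S$.

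Next I would establish the remaining dictionary entries. For a set $J$ of indices, the linear manifold $\M(\{\langle P,S_j\rangle:j\in J\})$ consists exactly of the affine points of the subspace $\langle P,\langle S_j:j\in J\rangle\rangle$ of $\PG(rn,q)$: one inclusion is immediate, and for the reverse it suffices that $\S$ induces a spread on $\langle S_j:j\in J\rangle$ when the $S_j$ are normal and $|J|$ is small, which follows from normality together with the Barlotti--Cofman construction (exactly as used repeatedly in Section~\ref{r+1 normal elements}). Consequently the normal lines through $P$ are in general position (in the sense of the corollary) if and only if the $S_i$ are in general position as $(n-1)$-spaces of $H_\infty$; the linear manifold of some $r$ of the normal lines equals $\T$ if and only if the corresponding $r$ spread elements span $H_\infty$; and three normal lines through $P$ lie in a common affine plane of $\T$ if and only if the three normal elements lie in a common $(2n-1)$-space $\Pi_0$. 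Finally $\T(\S)\cong\AG(r,q^n)$ if and only if $\S$ is Desarguesian (Section~\ref{sperner}), and an affine plane of $\T(\S)$ corresponding to a $(2n-1)$-space $W$ is an affine semifield plane if and only if $\S\cap W$ is a semifield spread (the Andr\'e/Bruck--Bose correspondence, together with the description in Subsection~\ref{quasifieldsection}).

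With the dictionary in place each assertion is immediate. If $\T$ has $r+1$ normal lines through $P$ in general position, then $\S$ has $r+1$ normal elements in general position, so by Theorem~\ref{r+1normal} $\S$ is Desarguesian and $\T\cong\AG(r,q^n)$. For $r=3$ and $q$ odd: if $\T$ has three normal lines through $P$ in a common affine plane, then $\S$ has three normal elements in a common $(2n-1)$-space $\Pi_0$, so by Theorem~\ref{SpreadBySemifield} $\S\cong\T_3(\Mb,\Mb_0)$ with $\Mb$ a semifield spread set; a plane of $\T$ containing exactly one of the three lines corresponds to a $(2n-1)$-space $W$ containing exactly one of $S_1,S_2,S_3$, hence $W\neq\Pi_0$, and reading off the normal form $\T_3(\Mb,\Mb_0)$ shows that $\S\cap W$ is again a semifield spread, so the plane is an affine semifield plane; and if $\T$ has four normal lines through $P$ not all in a common affine plane, then $\S$ has four normal elements not all in a common $(2n-1)$-space, so by Corollary~\ref{4elementsDesarguesian} $\S$ is Desarguesian and $\T\cong\AG(3,q^n)$. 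For $q$ odd: if $\T$ has at least $r+1$ normal lines through $P$, some $r$ of which have linear manifold all of $\T$, then $\S$ has at least $r+1$ normal elements, some $r$ of which span $H_\infty$, so by Theorem~\ref{r+1normalnotgeneral} $\S$ is Desarguesian and $\T\cong\AG(r,q^n)$.

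The main obstacle is the middle paragraph: proving cleanly that the linear manifold of a family of normal lines through $P$ is exactly the affine part of the span of $P$ with the corresponding spread elements --- this is where normality genuinely enters, via the fact that $\S$ induces a spread on those spans --- together with the routine but slightly fiddly check that in the normal form $\T_3(\Mb,\Mb_0)$ every $(2n-1)$-space through $S_i$ other than $\Pi_0$ carries a semifield subspread of $\S$. Everything else is bookkeeping on top of the theorems already proved.
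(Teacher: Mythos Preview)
Your proposal is correct and follows exactly the route the paper takes: the paper's own proof is the single sentence ``Given Theorem~\ref{thm:spernernormal}, the results follow immediately from Theorem~\ref{r+1normal}, Theorem~\ref{SpreadBySemifield}, Corollary~\ref{4elementsDesarguesian} and Theorem~\ref{r+1normalnotgeneral}.'' Your dictionary between normal lines, linear manifolds, and affine subplanes on the Sperner side and normal elements, spans, and $(2n-1)$-subspaces on the spread side is precisely what that sentence is suppressing, and the one genuinely nontrivial check you flag --- that in the normal form $\T_3(\Mb,\Mb_0)$ every $(2n-1)$-space through one of $S_1,S_2,S_3$ other than $\Pi_0$ carries a spread equivalent to $\S(\Mb)$ --- is indeed routine once $\Mb$ is known to be additively closed.
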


%{\bf Acknowledgements.}

%The authors would like to thank the anonymous referees for their suggestions towards improving this paper.

\end{document}